\def \F {{\mathcal F}}
\def \H {{\mathcal H}}
\def \N {{\mathbb N}}
\def \P {{\mathbb P}}
\def \R {{\mathbb R}}
\def \I {{\mathbf 1}}
\def \bF {{\mathbb F}}
\def \bH {{\mathbb H}}
\newtheorem{theorem}{Theorem}[section]
\newtheorem{lemma}[theorem]{Lemma}
\newtheorem{definition}[theorem]{Definition}
\newtheorem{remark}[theorem]{Remark}
\newtheorem{proposition}[theorem]{Proposition}
\newtheorem{example}[theorem]{Example}
\newcommand{\ud}{\mathrm d}
\newcommand{\esp}[2][\mathbb E] {#1\left[#2\right]}
\newcommand{\condespf}[2][\F_t]       {\mathbb E\left.\left[#2\right|#1\right]}
\newcommand{\condespfo}[2][\F_0]       {\mathbb E\left.\left[#2\right|#1\right]}
\numberwithin{equation}{section}
\newcommand{\condespho}[2][\H_0]       {\mathbb E\left.\left[#2\right|#1\right]}
\newcommand{\condesph}[2][\H_t]       {\mathbb E\left.\left[#2\right|#1\right]}
\newcommand{\condesphh}[2][\H_{\tau-}]       {\mathbb E\left.\left[#2\right|#1\right]}
\newcommand{\condesphs}[2][\H_s]       {\mathbb E\left.\left[#2\right|#1\right]}
\newcommand{\mail}[1]{\href{mailto:#1}{\texttt{#1}}}
\title{GKW representation theorem and linear BSDEs under restricted information.
An application to risk-minimization.
}
\author{Claudia  Ceci\footnote{Dipartimento di Economia,
Universit{\`a} degli Studi ``G. D'Annunzio'' di Chieti-Pescara, Viale Pindaro 42,
I-65127 Pescara, Italy.
E-mail: \mail{ceci@sci.unich.it}.}
\and Alessandra Cretarola\footnote{Dipartimento di Matematica e Informatica, Universit{\`a} degli Studi di Perugia, via Vanvitelli, 1, I-06123 Perugia, Italy. E-mail: \mail{alessandra.cretarola@dmi.unipg.it}.}
\and Francesco Russo\footnote{Ecole Nationale Sup{\'e}rieure
 des Techniques Avancées,
ENSTA-ParisTech
Unit{\'e} de Math{\'e}matiques appliqu{\'e}es, 32 Boulevard Victor,
F-75739 Paris Cedex 15, France 
and INRIA Rocquencourt \& Cermics Ecole des Ponts,
 Projet MathFi. 
E-mail: \mail{francesco.russo@ensta-paristech.fr}.}}
\date{}
\begin{document}
\maketitle
\begin{abstract}
In this paper we provide Galtchouk-Kunita-Watanabe representation results 
in the case where there are restrictions on the available information. 
This allows to prove existence and uniqueness for linear backward stochastic
 differential equations driven by a general c{\`a}dl{\`a}g martingale under
 partial information. Furthermore, we discuss an application to
 risk-minimization where we extend the results of F{\"o}llmer and 
Sondermann (1986) to the partial information framework and we show how 
our result fits in the approach of Schweizer (1994).
\end{abstract}
\vspace{5mm}

\noindent {\it  Mathematics Subject Classification (2000):}
 60H10, 60H30, 91B28.\\
\noindent {\it Key words and phrases:} Backward stochastic differential 
equations, partial information, Galtchouk-Kunita-Watanabe decomposition,
 predictable dual projection, risk-minimiza-tion.

\vspace{2mm}
\section{Introduction}

\noindent
This paper provides two main contributions. First, we prove Galtchouk-Kunita-Watanabe representation results in the case where there are restrictions on the available information and we show an application to risk-minimization. Second, as an important consequence, we prove existence and uniqueness for linear backward stochastic differential equations (in short BSDEs) driven by a general c{\`a}dl{\`a}g martingale under partial information.\\
For BSDEs driven by a general c{\`a}dl{\`a}g martingale beyond the Brownian setting, there exist very few results in literature (see~\cite{ekh97} and more recently~\cite{bdm02} and~\cite{cfs08}, as far as we are aware). Here we study for the first time such a general case in the situation where there are restrictions on the available information, that represents an interesting issue arising in many financial problems. Mathematically, this  means to consider an additional filtration $\bH$ smaller than the full information flow $\bF$.
A typical example arises when ${\cal H}_t = {\cal F}_{(t-\tau)^+}$
where $\tau \in (0,T)$ is a fixed delay and $(t-\tau)^+:=\max\{0, t-\tau\}$ and $T$ denotes a time horizon.  \\ 
We start our investigation by considering BSDEs of the form
\begin{equation} \label{eq:bsde11} 
Y_t=\xi - \int_t^T Z_s \ud M_s -(O_T - O_t), \quad 0 \leq t \leq T,
\end{equation}
driven by a square-integrable (c{\`a}dl{\`a}g) martingale $M=(M_t)_{0\leq t\leq T}$, where $T>0$ is a fixed time horizon, $\xi \in L^2(\Omega,\F_T,\P;\R)$\footnote{The space $L^2(\Omega,\F_T,\P;\R)$ denotes the set of all $\F_T$-measurable real-valued random variables $H$ such that $\esp{|H|^2} = \int_\Omega |H|^2\ud \P < \infty$.} denotes the terminal condition and $O=(O_t)_{0\leq t \leq T}$ is a square-integrable $\bF$-martingale with $O_0 = 0$, satisfying a suitable orthogonality condition that we will make more precise in the next section. \\
We look for a solution $(Y,Z)$ to equation \eqref{eq:bsde11} under partial information, where $Y=(Y_t)_{0\leq t\leq T}$ is a c{\`a}dl{\`a}g $\bF$-adapted process and $Z=(Z_t)_{0\leq t\leq T}$ is an $\bH$-predictable process such that $\esp{\int_0^T|Z_t|^2\ud \langle M\rangle_t} < \infty$.\\
To this aim, we prove a Galtchouk-Kunita-Watanabe decomposition in the case where there are restrictions on the available information. 
More precisely, we obtain that every random variable $\xi \in L^2(\Omega,\F_T,\P;\R)$ can be uniquely written as 
\begin{equation} \label{eq:GKW1}
\xi = U_0 + \int_0^T H_t^\H\ud M_t + O_T, \quad \P-{\rm a.s.},
\end{equation}
where $H^\H=(H^\H)_{0\leq t\leq T}$ is an $\bH$-predictable process such that $\esp{\int_0^T|H_t^\H|^2\ud \langle M\rangle_t} < \infty$. To the authors' knowledge such a decomposition has not been proved yet in the existing literature. We will see that decomposition \eqref{eq:GKW1} allows to construct a solution to the BSDE \eqref{eq:bsde11} and ensures its uniqueness in this setting.\\
Moreover, we are able to provide an explicit characterization of the integrand process $H^\H$ given in decomposition  \eqref{eq:GKW1} in terms of the one appearing in the classical Galtchouk-Kunita-Watanabe decomposition, by using $\bH$-predictable (dual) projections. \\
Finally, we discuss a financial application. More precisely, we study the problem of hedging a contingent claim in the case where investors acting in the market have partial information. Since the market is incomplete we choose the risk-minimization approach, a quadratic hedging method which keeps the replicability constraint and relaxes the self financing condition, see~\cite{fs86} and~\cite{s01} for further details. As in~\cite{fs86} and~\cite{s94}, we consider the case where the price process is a martingale under the real world probability measure. In~\cite{fs86}, under the case of full information, the authors provide the risk-minimizing hedging strategy in terms of the classical Galtchouk-Kunita-Watanabe decomposition. Here, by using the Galtchouk-Kunita-Watanabe decomposition under partial information, we extend this result to the case where there are restrictions on the available information. Finally, thanks to the explicit representation of the integrand process $H^\H$ appearing in decomposition \eqref{eq:GKW1}, we find the same expression for the optimal strategy in terms of the Radon-Nikodym derivative of two $\bH$-predictable dual projections, that is proved in~\cite{s94}. \\
The paper is organized as follows. In Section \ref{setting} we give the definition of solution to BSDEs under partial information. Section \ref{sec:exun} is devoted to prove existence and uniqueness results for the solutions, which are obtained by applying the Galtchouk-Kunita-Watanabe decomposition adapted to the restricted information setting. The explicit representation of the integrand process $H^\H$ appearing in \eqref{eq:GKW1} can be found in Section \ref{sect:GKW}. Finally, an application to risk-minimization is given in Section \ref{sec:rm}.

 \section{Setting} \label{setting}

Let us fix a probability space $(\Omega,\F,\P)$ endowed with a filtration $\bF:= (\F_t)_{0\leq t\leq T}$, where $\F_t$
represents the full information at time $t$. We assume that $\F_T=\F$. Then we consider a subfiltration $\bH := (\H_t)_{0\leq t\leq T}$
of $\bF$, i.e. $\H_t \subseteq \F_t$, for each $t \in [0,T]$, corresponding to the available information level. We
remark that both filtrations are assumed to satisfy the usual hypotheses of completeness and right-continuity, see e.g.~\cite{pp}.\\
For simplicity we only consider the one-dimensional case. Extensions to several dimensions
are straightforward and left to the reader. The data of the problem are:
\begin{itemize}
\item an $\R$-valued square-integrable (c{\`a}dl{\`a}g) $\bF$-martingale $M=(M_t)_{0\leq t\leq T}$ with $\bF$-predictable
quadratic variation process denoted by $\langle M\rangle  = (\langle M,M\rangle)_{0\leq t\leq T}$;
\item a terminal condition $\xi \in L^2(\Omega,\F_T,\P;\R)$.
\end{itemize}

\begin{definition} \label{def:solBSDE}
A solution of the BSDE 
\begin{equation}\label{eq:bsde2}
Y_t=\xi - \int_t^T Z_s \ud M_s -(O_T - O_t), \quad 0 \leq t \leq T,
\end{equation}
with data $(\xi,\bH)$ under partial information, where $O=(O_t)_{0\leq t \leq T}$ is a square-integrable $\bF$-martingale with $O_0 = 0$, satisfying the orthogonality condition
\begin{equation} \label{eq:orthogcond}
\esp{O_T \int_0^T \varphi_t \ud M_t}=0,
\end{equation}
for all $\bH$-predictable processes $\varphi=(\varphi_t)_{0\leq t\leq T}$ such that  $\esp{\int_0^T|\varphi_t|^2\ud \langle M\rangle_t} < \infty$,
is a couple $(Y,Z)$ of processes with values in $\R \times \R$, satisfying (\ref{eq:bsde2}) such that
\begin{itemize}
\item $Y=(Y_t)_{0\leq t\leq T}$ is a c{\`a}dl{\`a}g $\bF$-adapted process;
\item $Z=(Z_t)_{0\leq t\leq T}$ is an $\bH$-predictable process such that $\esp{\int_0^T|Z_t|^2\ud \langle M\rangle_t} < \infty$.
\end{itemize}
\end{definition}

\begin{remark}
The orthogonality condition given in \eqref{eq:orthogcond} is weaker than the classical strong orthogonality condition, see e.g~\cite{js} or~\cite{pp}. Indeed, set $N_t=\int_0^t \varphi_s\ud M_s$, for each $t \in [0,T]$, where $\varphi$ is an  $\bH$-predictable process such that  $\esp{\int_0^T|\varphi_t|^2\ud \langle M\rangle_t} < \infty$. If 
$$
\langle O,M\rangle_t=0\quad \P-{\rm a.s.}, \quad \forall t \in [0,T],
$$
then
$$
\langle O,N\rangle_t=\int_0^t \varphi_s\ud \langle O,M\rangle_s=0\quad \P-{\rm a.s.}, \quad \forall t \in [0,T].
$$  
Consequently, $O \cdot N$ is an $\bF$-martingale null at zero, that implies 
$$
\esp{O_t N_t}=0, \quad \forall t \in [0,T],
$$ 
and in particular condition \eqref{eq:orthogcond}.
\end{remark}

\begin{remark}
Since for any $\bH$-predictable process $\varphi$, the process
$\I_{(0,t]}(s) \varphi_s$, with $t \leq T$, 
is $\bH$-predictable,  condition \eqref{eq:orthogcond} 
implies that for every $t \in [0,T]$
$$
\esp{O_T \int_0^t \varphi_s \ud M_s}=0,
$$
and 
by conditioning with respect to $\F_t$ (note that $O$ is an $\bF$-martingale), we have
$$
\esp{O_t \int_0^t \varphi_s \ud M_s}= \esp{\int_0^t \varphi_s \ud \langle M, O \rangle _s }= 0 \quad \forall t \in [0,T].
$$
From this last equality, we can argue that in the case of full information, i.e., $\H_t=\F_t$, for each $t \in [0,T]$, condition \eqref{eq:orthogcond} is equivalent to the strong orthogonality condition between $O$ and $M$ (see e.g. Lemma 2 and Theorem 36, Chapter IV, page 180 of~\cite{pp} for a rigorous proof). 
\end{remark}

\section{Existence and uniqueness for linear BSDEs under partial information} \label{sec:exun}

\noindent Our aim is to investigate existence and uniqueness of a solution to the BSDE \eqref{eq:bsde2} with data $(\xi, \bH)$ driven by the general martingale $M$ in the sense of Definition \ref{def:solBSDE}. This requires to 
prove a Galtchouk-Kunita-Watanabe representation result under restricted information.\\
\noindent 
We introduce
the linear subspace $\mathcal L_T^\H$ of $L^2(\Omega,\F_T,\P;\R)$ given by all
random variables $\eta$ of the form
\begin{equation} \label{def:subspace}
\left \{U_0 + \int_0^T \varphi_t \ud M_t\bigg{|}\ U_0 \in \H_0,\ \varphi\ {\rm is}\ \bH-{\rm predictable\ with}\ \esp{\int_0^T|\varphi_t|^2\ud \langle M\rangle_t} < \infty\right\}.
\end{equation}

\begin{lemma}\label{lem:closed}
The set $\mathcal L_T^\H$ is a closed subspace of $L^2(\Omega,\F_T,\P;\R)$.
\end{lemma}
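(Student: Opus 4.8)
The plan is to show that $\mathcal L_T^\H$ is closed by taking a sequence $\eta^n = U_0^n + \int_0^T \varphi^n_t\,\ud M_t$ in $\mathcal L_T^\H$ converging in $L^2(\Omega,\F_T,\P;\R)$ to some $\eta$, and proving that $\eta \in \mathcal L_T^\H$. The natural first step is to establish that the two pieces of the decomposition — the $\H_0$-measurable constant part $U_0^n$ and the stochastic integral part $\int_0^T \varphi^n_t\,\ud M_t$ — are \emph{orthogonal} in $L^2$. Indeed, since $\varphi^n$ is $\bH$-predictable and $M$ is an $\bF$-martingale with $\H_0\subseteq\F_0$, conditioning on $\F_0$ gives $\esp{U_0^n \int_0^T \varphi^n_t\,\ud M_t} = \esp{U_0^n\,\esp[\condespfo{\int_0^T\varphi^n_t\,\ud M_t}]} = 0$ because the stochastic integral is an $\bF$-martingale null at $0$. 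Hence $\|\eta^n\|^2 = \|U_0^n\|^2 + \esp{\int_0^T |\varphi^n_t|^2\,\ud\langle M\rangle_t}$ by the Itô isometry, so the map $(U_0,\varphi)\mapsto U_0+\int_0^T\varphi_t\,\ud M_t$ is an isometry from $L^2(\Omega,\H_0,\P)\oplus \mathcal L^2_{\bH}(M)$ onto $\mathcal L_T^\H$, where $\mathcal L^2_{\bH}(M)$ denotes the $\bH$-predictable processes square-integrable with respect to $\ud\langle M\rangle_t\,\ud\P$.

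The second step uses this isometry to transfer the Cauchy property: since $(\eta^n)$ converges it is Cauchy in $L^2$, so by the identity above $(U_0^n)$ is Cauchy in the closed space $L^2(\Omega,\H_0,\P;\R)$ and $(\varphi^n)$ is Cauchy in $\mathcal L^2_{\bH}(M)$. The latter is complete because it is an $L^2$-space over the measure space $([0,T]\times\Omega,\mathcal P_\H,\ud\langle M\rangle_t\,\ud\P)$, where $\mathcal P_\H$ is the $\bH$-predictable $\sigma$-field (here one uses that $\bH$ satisfies the usual conditions so that the predictable $\sigma$-field is well-behaved and $\langle M\rangle$ is $\bF$-predictable hence the reference measure is $\sigma$-finite). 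Thus $U_0^n \to U_0$ in $L^2(\Omega,\H_0,\P;\R)$ and $\varphi^n \to \varphi$ in $\mathcal L^2_{\bH}(M)$ for some limits $U_0 \in \H_0$ and $\bH$-predictable $\varphi$ with $\esp{\int_0^T|\varphi_t|^2\,\ud\langle M\rangle_t}<\infty$.

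The final step is to identify the limit: by continuity of the stochastic integral as a map $\mathcal L^2_{\bH}(M)\to L^2(\Omega,\F_T,\P;\R)$ (again the Itô isometry), $\int_0^T\varphi^n_t\,\ud M_t \to \int_0^T\varphi_t\,\ud M_t$ in $L^2$, and $U_0^n\to U_0$ in $L^2$, so $\eta^n \to U_0 + \int_0^T\varphi_t\,\ud M_t$ in $L^2$. By uniqueness of the $L^2$-limit, $\eta = U_0 + \int_0^T \varphi_t\,\ud M_t \in \mathcal L_T^\H$, which proves closedness. I expect the main (mild) obstacle to be the completeness of $\mathcal L^2_{\bH}(M)$ together with the orthogonality identity: one must be careful that $\varphi$ is genuinely $\bH$-predictable — this is automatic since $\mathcal L^2_{\bH}(M)$ is defined as a space of $\bH$-predictable processes and $L^2$-limits can be chosen within it — and that the cross term vanishes, which is exactly where the sub-filtration hypothesis $\H_0\subseteq\F_0$ enters. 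Everything else is a routine application of the Itô isometry and Hilbert-space completeness.
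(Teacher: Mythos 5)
Your proof is correct and follows essentially the same route as the paper: split $\eta^n$ into its $\H_0$-part and the stochastic integral, use the It\^o isometry to obtain Cauchyness of $(\varphi^n)_{n\in\N}$ in $L^2(\Omega,\ud\langle M\rangle\otimes\ud\P)$, and pass to the limit to identify $\eta=U_0+\int_0^T\varphi_t\,\ud M_t\in\mathcal L_T^\H$. The only cosmetic differences are that you phrase the splitting as an isometric direct sum via the orthogonality $\esp{U_0^n\int_0^T\varphi^n_t\,\ud M_t}=0$, whereas the paper extracts $U_0^n=\condespho{\eta^n}$ directly by conditioning, and you get $\bH$-predictability of the limit $\varphi$ from completeness of $L^2$ over the $\bH$-predictable $\sigma$-field rather than from an $\ud\langle M\rangle\otimes\ud\P$-a.e.\ convergent subsequence.
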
 
\begin{proof}
Let $U_0^n \in \H_0$ and $(\varphi^n)_{n \in \N}$, with $\varphi^n=(\varphi_t^n)_{0\leq t\leq T}$, be a sequence of
$\bH$-predictable processes satisfying $\esp{\int_0^T|\varphi_t^n|^2\ud \langle M\rangle_t} < \infty$ such that the sequence
\begin{equation*} \label{def:seq}
\eta^n = U_0^n + \int_0^T \varphi_t^n \ud M_t, \quad n \in \mathbb N,
\end{equation*}
converges to some random variable $\eta \in L^2(\Omega,\F_T,\P;\R)$, as $n$ goes to infinity. By taking the
conditional expectation with respect to $\H_0$, we have
$$
U_0^n = \condespho{\eta^n} \longrightarrow \condespho{\eta}, \quad {\rm as}\ n \to \infty.
$$
We set $U_0= \condespho{\eta}$. Since $(\eta^n-U_0^n)_{n \in \N}$ is a Cauchy sequence in $L^2(\Omega,\F_T,\P;\R)$, it follows
that
$$
\esp{\int_0^T(\varphi_{t}^n-\varphi_{t}^m)^2\ud \langle M\rangle_t} \longrightarrow 0, \quad {\rm as}\ n,m \to \infty.
$$
Consequently, $(\varphi^n)_{n \in \N}$ converges in $L^2(\Omega, \ud \langle M\rangle \otimes \ud \P)$\footnote{The space $L^2(\Omega, \ud \langle M\rangle \otimes \ud \P)$ denotes the set of all $\bF$-adapted processes $\varphi=(\varphi_t)_{0 \leq t \leq T}$ such that $$
\|\varphi\|_{L^2(\Omega, \ud \langle M\rangle \otimes \ud \P)}:=\left(\esp{\int_0^T|\varphi_t|^2\ud \langle M\rangle_t}\right)^{\frac{1}{2}} < \infty.
$$}
to some process $\varphi=(\varphi_t)_{0\leq t\leq T} \in L^2(\Omega, \ud \langle M\rangle \otimes \ud \P)$. 
Finally, since there is a subsequence converging $\ud \langle M\rangle \otimes \ud \P$-a.e., the limit $\varphi$ is
necessarily an $\bH$-predictable process.
\end{proof}

\begin{proposition} \label{prop:GKW}
Let $\xi \in L^2(\Omega,\F_T,\P;\R)$. There exists a unique decomposition of the form
\begin{equation} \label{eq:GKW}
\xi = U_0 + \int_0^T H_t^\H\ud M_t + O_T, \quad \P-{\rm a.s.},
\end{equation}
where $U_0 \in \H_0$, $H^\H$ is an $\bH$-predictable process such that $\esp{\int_0^T|H_t^\H|^2\ud \langle M\rangle_t} < \infty$ and $O$ is a square-integrable $\bF$-martingale with $O_0=0$ such that $\esp{O_T \cdot \eta}=0$, for every $\eta \in \mathcal L_T^\H$. Moreover $U_0= \condespho{\xi}$ and $\condespho{O_T}=0$.
\end{proposition}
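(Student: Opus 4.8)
The plan is to exhibit the decomposition \eqref{eq:GKW} as an orthogonal projection in the Hilbert space $L^2(\Omega,\F_T,\P;\R)$. By Lemma \ref{lem:closed}, the set $\mathcal L_T^\H$ is a closed linear subspace, so I would let $\pi(\xi)$ denote the orthogonal projection of $\xi$ onto $\mathcal L_T^\H$ and set $O_T := \xi - \pi(\xi)$. By definition of orthogonal projection, $\esp{O_T \cdot \eta}=0$ for every $\eta \in \mathcal L_T^\H$, which is exactly the required orthogonality property. Since $\pi(\xi) \in \mathcal L_T^\H$, it can be written as $U_0 + \int_0^T H_t^\H \ud M_t$ with $U_0 \in \H_0$ and $H^\H$ an $\bH$-predictable process satisfying the integrability condition in \eqref{def:subspace}; this gives \eqref{eq:GKW}.

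Next I would identify $U_0$ and check the martingale requirement on $O$. Conditioning \eqref{eq:GKW} on $\H_0$: since $U_0$ is $\H_0$-measurable, since the constants $\I_{(0,t]}(s)$ are $\bH$-predictable (so $\int_0^T \ud M_t = M_T - M_0$ lies in $\mathcal L_T^\H$, and more to the point $\condespho{\int_0^T H_t^\H \ud M_t}=0$ because the stochastic integral is an $\bF$-martingale null at time $0$ and $\H_0 \subseteq \F_0$), and since $\condespho{O_T}$ must then equal $\esp{O_T | \H_0}$, I need to argue $\condespho{O_T}=0$. This follows because the constant random variable $\mathbf 1 \in \H_0 \subseteq \mathcal L_T^\H$ (take $U_0 = 1$, $\varphi \equiv 0$), and more generally any $\H_0$-measurable square-integrable random variable lies in $\mathcal L_T^\H$, so orthogonality of $O_T$ to all such variables forces $\esp{O_T \, \zeta}=0$ for all bounded $\H_0$-measurable $\zeta$, i.e. $\condespho{O_T}=0$. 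Taking $\H_0$-conditional expectation in \eqref{eq:GKW} then yields $U_0 = \condespho{\xi}$. To see that $O$, defined as the càdlàg version of the $\bF$-martingale $t \mapsto \esp{O_T | \F_t}$, satisfies $O_0 = 0$: $O_0 = \esp{O_T|\F_0}$, and I would note $\condespho{O_0} = \condespho{O_T} = 0$; to upgrade this to $O_0 = 0$ itself one observes that $O_T$ is orthogonal in $L^2$ to $\int_0^T \varphi_t \ud M_t$ for all $\bH$-predictable $\varphi$, but this is not immediately enough for $\F_0$-measurability. The clean route is: $O := \xi - \pi(\xi)$ is an $\F_T$-measurable random variable, and the process $O_t := \esp{O_T|\F_t}$ is by construction a square-integrable $\bF$-martingale; the requirement $O_0 = 0$ is part of what must be verified, and the natural normalization is to work instead with $O_T - \condespfo{O_T}$ absorbed into $U_0$. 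Concretely: replace $U_0$ by $U_0 + \condespfo{O_T}$ — but this is only $\F_0$-measurable, not $\H_0$-measurable, so care is needed here.

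The main obstacle, and the step I would spend the most effort on, is precisely this normalization of $O$ at time $0$ together with the uniqueness claim. For uniqueness, suppose $\xi = U_0 + \int_0^T H_t^\H \ud M_t + O_T = U_0' + \int_0^T (H_t^\H)' \ud M_t + O_T'$ with both triples satisfying the stated properties. Subtracting, $(\,U_0 - U_0'\,) + \int_0^T (H^\H - (H^\H)')_t \ud M_t = O_T' - O_T$. The left side lies in $\mathcal L_T^\H$; the right side is orthogonal to $\mathcal L_T^\H$ (being a difference of two elements each orthogonal to it); hence both sides have zero $L^2$-norm, giving $O_T = O_T'$, $U_0 - U_0' + \int_0^T (H^\H - (H^\H)')_t \ud M_t = 0$. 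Then conditioning on $\H_0$ gives $U_0 = U_0'$ (using $\condespho{\int_0^T (\cdot)\ud M_t}=0$), and the isometry for the stochastic integral gives $\esp{\int_0^T |(H^\H - (H^\H)')_t|^2 \ud\langle M\rangle_t}=0$, i.e. $H^\H = (H^\H)'$ in $L^2(\Omega, \ud\langle M\rangle \otimes \ud\P)$. Finally $O_0 = 0$ follows for the canonical choice because one shows $\condespfo{O_T}=0$: indeed $O_T \perp \mathcal L_T^\H \supseteq \{\text{all }\F_0\text{-measurable... }\}$ — no, this fails since $\F_0 \not\subseteq \H_0$ in general. So the honest resolution is that the decomposition is stated with $O_0 = 0$ as a normalization that pins down $O$ uniquely given $O_T$, and one must verify $\condespfo{O_T} = 0$ using the specific structure — I would check that the constant $\mathbf 1$ and, crucially, that $\esp{O_T} = 0$ (orthogonality to $U_0 \equiv$ const), which gives $\esp{\condespfo{O_T}} = 0$ but not pointwise vanishing. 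I expect the paper resolves this by either assuming $\H_0 = \F_0$ (a common convention, or that $\F_0$ is trivial), or by absorbing $\condespho{O_T}$ correctly; I would follow whichever normalization the authors adopt and note that under the completeness/right-continuity hypotheses with trivial $\F_0$ the claim $O_0 = \condespho{O_T} = 0$ is immediate.
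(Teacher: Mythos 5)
You take the same route as the paper: orthogonal projection of $\xi$ onto the closed subspace $\mathcal L_T^\H$ (Lemma \ref{lem:closed}), with $O_T:=\xi-P_{\mathcal L_T^\H}(\xi)$, identification of $U_0=\condespho{\xi}$ and $\condespho{O_T}=0$ by conditioning on $\H_0$, and a uniqueness argument (the difference of two decompositions lies in $\mathcal L_T^\H\cap(\mathcal L_T^\H)^\bot$, then condition on $\H_0$ and use the It{\^o} isometry) which is the intended one, spelled out in more detail than in the paper, whose proof just says uniqueness is ensured by the projection. The one point where you hesitate --- why the martingale $O_t:=\condespf{O_T}$ satisfies $O_0=0$ --- is precisely the point the paper does not prove either: its proof simply invokes ``$\condespfo{O_T}=O_0=0$''. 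Your worry is legitimate: any square-integrable $\bF$-martingale with terminal value $O_T$ must have $O_0=\condespfo{O_T}=\condespfo{\xi}-\condespho{\xi}$, so the normalization $O_0=0$ is available only when $\condespfo{\xi}$ is $\H_0$-measurable, for instance when $\F_0$ is trivial (the usual convention, implicitly in force here) or when $\H_0=\F_0$; no redefinition of $O$ or of $U_0$ can repair this in general, exactly as you observe. Under that convention your argument is complete, and your derivation of $\condespho{O_T}=0$ directly from the orthogonality of $O_T$ to every square-integrable $\H_0$-measurable random variable (which belongs to $\mathcal L_T^\H$, taking $\varphi\equiv 0$) is in fact cleaner than the paper's, which deduces it from the asserted $O_0=0$.
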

\begin{proof}
The existence and uniqueness property of decomposition \eqref{eq:GKW} is clearly ensured
by the orthogonal projection of the random variable $\xi$ onto the space $\mathcal L_T^\H$, that is closed in virtue of Lemma \ref{lem:closed}. Since $(U_0 + \int_0^\cdot H_t^\H\ud M_t)$ is an $\bF$-martingale, by taking the conditional expectation of $\xi$
with respect to $\H_0$ in \eqref{eq:GKW}, we have
\begin{align*}
\condespho{\xi} & = \condespho{U_0 + \int_0^T H_t^\H\ud M_t} + \condespho{O_T}\\
& = \condespho{\condespfo{U_0 + \int_0^T H_t^\H\ud M_t}} + \condespho{\condespfo{O_T}}\\
&=\condespho{U_0},
\end{align*}
where in the last equality we have used the fact that $\condespfo{O_T}=O_0=0$.
Consequently $\condespho{O_T}=0$ and $U_0=\condespho{U_0}=\condespho{\xi}$. This concludes the proof.
\end{proof}

\begin{theorem}
Given data $(\xi,\bH)$, there exists a unique couple $(Y,Z)$ which solves the
BSDE \eqref{eq:bsde2} according to Definition \ref{def:solBSDE}.
\end{theorem}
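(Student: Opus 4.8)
The plan is to deduce the theorem directly from the Galtchouk-Kunita-Watanabe decomposition established in Proposition~\ref{prop:GKW}. Applying that proposition to the terminal condition $\xi$ yields $U_0 \in \H_0$, an $\bH$-predictable process $H^\H$ with $\esp{\int_0^T|H_t^\H|^2\ud \langle M\rangle_t} < \infty$, and a square-integrable $\bF$-martingale $O$ with $O_0 = 0$ satisfying $\esp{O_T\cdot\eta}=0$ for every $\eta \in \mathcal L_T^\H$, such that $\xi = U_0 + \int_0^T H_t^\H\ud M_t + O_T$ almost surely. First I would check that this $O$ does qualify as the martingale admitted in Definition~\ref{def:solBSDE}: given any $\bH$-predictable $\varphi$ with $\esp{\int_0^T|\varphi_t|^2\ud\langle M\rangle_t}<\infty$, the random variable $\int_0^T\varphi_t\ud M_t$ lies in $\mathcal L_T^\H$ (take $U_0 = 0$), so the orthogonality condition \eqref{eq:orthogcond} is exactly the statement $\esp{O_T\cdot\eta}=0$ specialized to such $\eta$. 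Hence the $O$ coming out of the decomposition is an admissible ``noise'' martingale in the sense of the BSDE.

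Next I would construct the candidate solution by setting $Z := H^\H$ and, for each $t\in[0,T]$,
\begin{equation*}
Y_t := U_0 + \int_0^t H_s^\H\ud M_s + O_t.
\end{equation*}
Then $Y$ is a c\`adl\`ag $\bF$-adapted process (a sum of a stochastic integral against the c\`adl\`ag martingale $M$ and the c\`adl\`ag martingale $O$), and $Z$ is $\bH$-predictable with the required square-integrability, so the structural requirements of Definition~\ref{def:solBSDE} are met. It remains to verify that $(Y,Z)$ solves \eqref{eq:bsde2}: from the definition of $Y$ and the GKW identity $\xi = U_0 + \int_0^T H_s^\H\ud M_s + O_T = Y_T$, one computes
\begin{equation*}
\xi - \int_t^T Z_s\ud M_s - (O_T - O_t) = Y_T - \int_t^T H_s^\H\ud M_s - (O_T - O_t) = Y_t,
\end{equation*}
which is precisely \eqref{eq:bsde2}. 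In particular $Y_T = \xi$, as required.

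For uniqueness, suppose $(Y,Z)$ and $(Y',Z')$ both solve \eqref{eq:bsde2} with the same data, where the associated noise martingales are $O$ and $O'$ respectively. Evaluating \eqref{eq:bsde2} at $t=0$ for each solution gives two decompositions $\xi = Y_0 + \int_0^T Z_s\ud M_s + O_T = Y_0' + \int_0^T Z_s'\ud M_s + O_T'$; since the requirement $\esp{O_T\cdot\eta}=0$ for $\eta\in\mathcal L_T^\H$ is forced (condition \eqref{eq:orthogcond} gives orthogonality to stochastic integrals, and orthogonality to the constants in $\H_0$ follows by conditioning as in the proof of Proposition~\ref{prop:GKW}, using $\condespfo{O_T}=O_0=0$), both triples $(Y_0, Z, O_T)$ and $(Y_0', Z', O_T')$ satisfy the hypotheses of the uniqueness part of Proposition~\ref{prop:GKW}. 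Hence $Y_0 = Y_0'$ ($=\condespho{\xi}$), $Z = Z'$ in $L^2(\Omega,\ud\langle M\rangle\otimes\ud\P)$, and $O_T = O_T'$; since $O$ and $O'$ are $\bF$-martingales with the same terminal value they are indistinguishable, and then $Y_t = Y_0 + \int_0^t Z_s\ud M_s + O_t = Y_t'$ for all $t$. The only genuinely delicate point is the bookkeeping in this last paragraph: one must check that the orthogonality property assumed of $O$ in the definition of a BSDE solution is strong enough to match the orthogonality property in Proposition~\ref{prop:GKW} (orthogonality to all of $\mathcal L_T^\H$, not merely to pure stochastic integrals), and the argument above shows this via the conditioning trick already used in the proof of the proposition.
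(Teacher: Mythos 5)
Your existence argument is essentially the paper's own: apply Proposition \ref{prop:GKW}, set $Z:=H^\H$ and take $O$ to be the orthogonal martingale part, and the check that orthogonality to all of $\mathcal L_T^\H$ implies condition \eqref{eq:orthogcond} is correct. The uniqueness argument, however, has a genuine gap at the step where you assert that the triples $(Y_0,Z,O_T)$ and $(Y_0',Z',O_T')$ obtained by evaluating \eqref{eq:bsde2} at $t=0$ ``satisfy the hypotheses of the uniqueness part of Proposition \ref{prop:GKW}''. That proposition asserts uniqueness only among decompositions whose first term lies in $\H_0$, whereas Definition \ref{def:solBSDE} only requires $Y$ to be $\bF$-adapted, so $Y_0$ is a priori merely $\F_0$-measurable. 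You do verify the other hypothesis (that $O_T$ is orthogonal to all of $\mathcal L_T^\H$, including the $\H_0$-part, via $\condespfo{O_T}=0$), but not this one; without $Y_0\in\H_0$ the random variable $Y_0+\int_0^T Z_t\,\ud M_t$ need not belong to $\mathcal L_T^\H$, and the Hilbert-space uniqueness of the orthogonal projection onto $\mathcal L_T^\H$ cannot be invoked, so the conclusion $Y_0=Y_0'$, $Z=Z'$, $O_T=O_T'$ is not yet justified as written.

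The gap is fixable with one more conditioning step: taking $\condespfo{\cdot}$ in $\xi=Y_0+\int_0^T Z_t\,\ud M_t+O_T$ gives $Y_0=\condespfo{\xi}$, since the stochastic integral and $O_T$ have vanishing $\F_0$-conditional expectation, and the same computation applied to decomposition \eqref{eq:GKW} gives $\condespfo{\xi}=U_0\in\H_0$; hence $Y_0=U_0$ $\P$-a.s.\ and your appeal to Proposition \ref{prop:GKW} becomes legitimate. (Alternatively, test the difference of the two $t=0$ identities against $\int_0^T\varphi_t\,\ud M_t$ to get $Z=Z'$ first, then condition on $\F_0$.) Note that the paper avoids the issue by a different route: it shows that the difference $\bar Y$ is an $\bF$-martingale with $\bar Y_T=0$, hence $\bar Y\equiv 0$; it then kills $\bar O$ by computing the predictable covariation of $\int_0^\cdot\bar Z_s\,\ud M_s+\bar O$ with $\bar O$ and using \eqref{eq:orthogcond} to obtain $\esp{\langle\bar O\rangle_t}=0$; only afterwards does it identify $Z$ with $H^\H$ through Proposition \ref{prop:GKW}. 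Either repair is short, but as it stands the key uniqueness step is unjustified.
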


\begin{proof} 
\noindent {\bf Existence.} Let $\mathcal L_T^\H$ be the linear subspace of $L^2(\Omega,\F_T,\P;\R)$ introduced in \eqref{def:subspace}. Given $\xi \in
L^2(\Omega,\F_T,\P;\R)$, we know by Proposition \ref{prop:GKW} that there exists a unique decomposition of
the form
$$
\xi = U_0 + \int_0^T H_t^\H\ud M_t + A_T, \quad \P-{\rm a.s.},
$$
where in particular $A$ is a square-integrable $\bF$-martingale with $A_0 = 0$ orthogonal to
all the elements in $\mathcal L_T^\H$. We use this result to construct a solution to the BSDE \eqref{eq:bsde2}. We
consider the orthogonal projection of $\xi \in
L^2(\Omega,\F_T,\P;\R)$ onto this space:
\begin{equation*} \label{eq:proj}
P_{\mathcal L_T^\H}(\xi):=U_0 + \int_0^T H_t^\H\ud M_t.
\end{equation*}
The couple $(U_0,H)$, where $U_0 \in \H_0$ and $H^\H$ is an $\bH$-predictable process in $L^2(\Omega,\ud \langle M\rangle \otimes \ud \P)$,
uniquely identifies the projection, that exists and it is well-defined since $\mathcal L_T^\H$ is closed. We set
$$
A_T:=\xi-P_{\mathcal L_T^\H}(\xi) \in (\mathcal L_T^\H)^\bot,
$$
where $(\mathcal L_T^\H)^\bot$ denotes the orthogonal subspace of
 $\mathcal L_T^\H$. Here $A_T$ corresponds to the
final value of a square-integrable $\bF$-martingale $A$ with zero initial value, that implies $\condespfo{\xi-U_0}=0$. Clearly, we have
$$
\mathcal L_T^\H \oplus (\mathcal L_T^\H)^\bot =L^2(\Omega,\F_T,\P).
$$
Now we define the process $Y$ as follows:
\begin{align*}
Y_t & :=\condespf{\xi}\\
&=\condespf{U_0 + \int_0^T H_t^\H\ud M_t + A_T}\\
&=\condespfo{U_0}+\int_0^t H_s^\H\ud M_s + A_t\\
&=Y_0+\int_0^t H_s^\H\ud M_s + A_t, \quad 0 \leq t\leq T,
\end{align*}
and we set $Z_t:=H_t^\H$ and $O_t:=A_t$, for every $t \in [0,T]$. Then we get
$$
Y_t = \xi -\int_t^T Z_s\ud M_s - (O_T-O_t), \quad 0 \leq t\leq T.
$$
\noindent {\bf Uniqueness.} Let $(Y,Z)$, $(Y',Z')$ be two solutions to the BSDE \eqref{eq:bsde2} under
partial information associated to the terminal condition $\xi \in L^2(\Omega,\F_T,\P;\R)$. We set
$(\bar Y,\bar Z) = (Y-Y',Z-Z')$. Then $(\bar Y,\bar Z)$ satisfies the BSDE
\begin{equation} \label{eq:bsde3}
\bar Y_t=-\int_t^T \bar Z_s \ud M_s -(\bar O_T - \bar O_t), \quad 0 \leq t \leq T,
\end{equation}
with final condition $\bar Y_T = 0$. In addition, we have set $\bar O := O-O'$ in \eqref{eq:bsde3}, where
$O$ and $O'$ denote the square-integrable $\bF$-martingales with $O_0 = O_0^{'} = 0$ satisfying the
orthogonality condition
$$
\esp{O_T\int_0^T \varphi_t \ud M_t}=\esp{O_T^{'}\int_0^T \varphi_t \ud M_t}=0,
$$
for all $\bH$-predictable processes $\varphi$ such that $\esp{\int_0^T|\varphi_t|^2\ud \langle M\rangle_t} < \infty$. Since $(\bar Y,\bar Z)$ is a solution of \eqref{eq:bsde3}, then
\begin{equation} \label{eq:barY}
\bar Y_t=\bar Y_0 + \int_0^t \bar Z_s \ud M_s + \bar O_t, \quad 0 \leq t \leq T.
\end{equation}
Since the process $\bar Y$ is an $\bF$-martingale such that $\bar Y_T = 0$, we have
$$
\bar Y_t=\condespf{\bar Y_T}=0, \quad  {\rm for\ all}\ t \in [0,T].
$$
Thus $Y_t=Y_t^{'}$ $\P$-a.s., for every $t \in [0,T]$.
Then we can rewrite \eqref{eq:barY} as follows
$$
0=\int_0^t \bar Z_s \ud M_s + \bar O_t, \quad 0 \leq t \leq T.
$$
By computing the predictable covariation of
$\int_0^{\cdot} \bar Z_s \ud M_s + \bar O$ and $\bar O$ and by taking the
expectation of both sides in the equality, for each $t \in [0,T]$, we obtain
\begin{align*}
0&=\int_0^t \bar Z_s \ud \langle M,\bar O\rangle_s + \langle\bar O\rangle_t\\
&=\esp{\int_0^t \bar Z_s \ud \langle M,\bar O\rangle_s}+\esp{\langle\bar O\rangle_t}.
\end{align*}
Since $\bar Z$ and $\bar O$ are differences of solutions to the BSDE \eqref{eq:bsde2}, then $\esp{\int_0^t \bar Z_s \ud \langle M,\bar O\rangle_s}=0$ for $t \in [0,T]$, and it follows
\begin{equation} \label{eq:espobar}
\esp{\langle\bar O\rangle_t}=0, \quad 0 \leq t \leq T.
\end{equation}
By Theorem 4.2 of~\cite{js}, since $\bar O$ is a square-integrable $\bF$-martingale null at zero, we have that $\bar{O}^2-\langle \bar O\rangle$ is an $\bF$-martingale null at zero. Then by \eqref{eq:espobar}
$$
\esp{\bar{O}_t^2}=\esp{\langle\bar O\rangle_t}=0,\quad 0 \leq t \leq T,
$$
that implies $\bar O_t^2=0$ $\P$-a.s. for every $t \in [0,T]$ and then $O_t=O_t^{'}$ $\P$-a.s. for every $t \in [0,T]$.
Now, let $Y$ be the unique solution of \eqref{eq:bsde2} for a certain $\bH$-predictable $Z$ such that $\esp{\int_0^T|Z_t|^2\ud \langle M\rangle_t} < \infty$, i.e.
\begin{equation} \label{eq:condy0}
Y_t=Y_0 + \int_0^t Z_s \ud M_s + O_t, \quad 0 \leq t \leq T.
\end{equation}
It only remains to prove that $Z$ is unique. 
For $t=T$ equation \eqref{eq:condy0} becomes
\begin{align*}
Y_T= \xi & = Y_0 + \int_0^T Z_s \ud M_s + O_T.
\end{align*}
By Proposition \ref{prop:GKW}, $Z_t = H_t^\H$ $\P$-a.s., for each $t \in [0,T]$ and then $Z$ is univocally determined. This concludes the proof.
\end{proof}

\section{Galtchouk-Kunita-Watanabe representation under partial information} \label{sect:GKW}

We now wish to provide an explicit characterization of the integrand process $H^\H$ appearing in the representation \eqref{eq:GKW} in terms of the one given in the classical Galtchouk-Kunita-Watanabe decomposition, by means in particular of the concept of $\bH$-predictable dual projection.\\
\noindent Let $\xi \in
L^2(\Omega,\F_T,\P;\R)$. We consider 
the well-known Galtchouk-Kunita-Watanabe decomposition of $\xi$ with respect to
$M$:
\begin{equation} \label{eq:GKWf}
\xi = \tilde U_0 + \int_0^T H_t^\F\ud M_t + \tilde O_T, \quad \P-{\rm a.s.},
\end{equation}
where $\tilde U_0 \in \F_0$, the integrand $H^\F=(H_t^\F)_{0\leq t \leq T}$ is an $\bF$-predictable process such that $\esp{\int_0^T|H_t^\F|^2\ud \langle M\rangle_t} < \infty$ and $\tilde O=(\tilde O_t)_{0\leq t \leq T}$ is a square-integrable $\bF$-martingale with $\tilde O_0=0$ such that $\langle \tilde O,M\rangle_t=0$, for every $t \in [0,T]$. 
Moreover, let us observe that $\tilde U_0=\condespfo{\xi}$.\\

\noindent 
In the sequel we will denote by ${}^pX$ the $\bH$-predictable projection of a (generic) integrable process $X=(X_t)_{0 \leq t\leq T}$, defined as the unique $\bH$-predictable process such that
$$
\condesphh{X_\tau \I_{\{\tau < \infty\}}}={}^pX_\tau \I_{\{\tau < \infty\}}\quad \P-{\rm a.s.}
$$
for every $\bH$-predictable stopping time $\tau$.\\
First we give a preliminary result under the additional assumption that the predictable quadratic variation
$\langle M\rangle$  of the $\bF$-martingale $M$ is an $\bH$-predictable process. In Theorem \ref{th:generalcase} we 
extend such result to the general case.
\begin{proposition} \label{prop:Hrepr}
Let $(\tilde U_0,H^\F,\tilde O_T)$ be the triplet corresponding to decomposition \eqref{eq:GKWf} of $\xi \in L^2(\Omega,\F_T,\P;\R)$. Suppose that the predictable quadratic variation
$\langle M\rangle$  of the $\bF$-martingale $M$ is an $\bH$-predictable process. Then
\begin{equation*} \label{eq:GKWh}
\xi = U_0 + \int_0^T H_t^\H\ud M_t + O_T, \quad \P-{\rm a.s.},
\end{equation*}
with
\begin{align}
U_0&=\condespho{\tilde U_0}, \label{eq:Uo}\\
H_t^\H&={}^p(H_t^\F), \quad 0\leq t \leq T \label{eq:hH},
\end{align}
and $O$ is a square-integrable $\bF$-martingale with $O_0=0$ such that $\esp{O_T \cdot \eta}=0$, for every $\eta \in \mathcal L_T^\H$. 
\end{proposition}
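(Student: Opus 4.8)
The plan is to produce, out of the classical decomposition \eqref{eq:GKWf}, the explicit candidate triplet $U_0:=\condespho{\tilde U_0}$, $H^\H_t:={}^p(H^\F_t)$ and $O_T:=\xi-U_0-\int_0^T{}^p(H^\F_t)\,\ud M_t$, to verify that it satisfies all the requirements of the $\bH$-decomposition in Proposition~\ref{prop:GKW}, and then to invoke the uniqueness statement there to conclude that this candidate \emph{is} the $\bH$-decomposition of $\xi$ --- which is precisely the content of \eqref{eq:Uo}--\eqref{eq:hH}. First I would record the consistency check that, by the tower property and $\H_0\subseteq\F_0$, $U_0=\condespho{\tilde U_0}=\condespho{\condespfo{\xi}}=\condespho{\xi}$, so \eqref{eq:Uo} agrees with the formula $U_0=\condespho{\xi}$ coming from Proposition~\ref{prop:GKW}.

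The first genuine step is to show that $H^\H={}^p(H^\F)$ is admissible, i.e.\ $\bH$-predictable --- immediate from the definition of the $\bH$-predictable projection --- and square-integrable with respect to $\ud\langle M\rangle\otimes\ud\P$. For the latter I would combine the conditional Jensen inequality for predictable projections, $\big|{}^p(H^\F_t)\big|^2\le{}^p\big(|H^\F_t|^2\big)$, with the identity $\esp{\int_0^T Y_t\,\ud A_t}=\esp{\int_0^T{}^pY_t\,\ud A_t}$, valid for nonnegative measurable $Y$ and an $\bH$-predictable increasing process $A$, applied with $A=\langle M\rangle$; this gives $\esp{\int_0^T|{}^p(H^\F_t)|^2\,\ud\langle M\rangle_t}\le\esp{\int_0^T|H^\F_t|^2\,\ud\langle M\rangle_t}<\infty$. \emph{This is the one and only place where the hypothesis that $\langle M\rangle$ is $\bH$-predictable is used.} Consequently $U_0+\int_0^\cdot{}^p(H^\F_t)\,\ud M_t\in\mathcal L_T^\H$, $O_T\in L^2(\Omega,\F_T,\P;\R)$, and $O_t:=\condespf{O_T}$ is a square-integrable $\bF$-martingale (the remaining properties of $O$, $O_0=0$ included, are inherited from the residual once the candidate is identified with the decomposition of Proposition~\ref{prop:GKW}).

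The second step is to check $\esp{O_T\cdot\eta}=0$ for every $\eta=V_0+\int_0^T\varphi_t\,\ud M_t\in\mathcal L_T^\H$. I would expand $\esp{O_T\cdot\eta}$ bilinearly, substituting \eqref{eq:GKWf} for $\xi$. The terms involving $\tilde O_T$ drop out because $\condespfo{\tilde O_T}=0$ (so $\esp{\tilde O_TV_0}=\esp{V_0\condespho{\tilde O_T}}=0$) and because $\langle\tilde O,M\rangle\equiv0$ gives $\langle\tilde O,\int_0^\cdot\varphi\,\ud M\rangle\equiv0$ (so $\esp{\tilde O_T\int_0^T\varphi_t\,\ud M_t}=0$). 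Every term $\esp{W\int_0^T\varphi_t\,\ud M_t}$ with $W\in\{\tilde U_0,U_0\}$ (both $\F_0$-measurable), and every term $\esp{(\int_0^Ta_t\,\ud M_t)V_0}$ with $a\in\{H^\F,{}^p(H^\F)\}$, vanishes after conditioning on $\F_0$, since the stochastic integrals involved are $\bF$-martingales null at $0$; and $\esp{\tilde U_0V_0}=\esp{V_0\condespho{\tilde U_0}}=\esp{U_0V_0}$ cancels the matching term coming from $-U_0$. The only survivor, produced by the polarised It{\^o} isometry, is $\esp{\int_0^T\big(H^\F_t-{}^p(H^\F_t)\big)\varphi_t\,\ud\langle M\rangle_t}$. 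Since $\varphi$ is $\bH$-predictable one has ${}^p(\varphi_tH^\F_t)=\varphi_t\,{}^p(H^\F_t)$, so this integrand equals $\varphi_tH^\F_t-{}^p(\varphi_tH^\F_t)$, and --- $\langle M\rangle$ being $\bH$-predictable and $\esp{\int_0^T|\varphi_tH^\F_t|\,\ud\langle M\rangle_t}<\infty$ by Cauchy--Schwarz --- the same commutation identity as in the first step makes it vanish; hence $\esp{O_T\cdot\eta}=0$ and uniqueness in Proposition~\ref{prop:GKW} closes the argument.

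The main obstacle I expect is exactly this systematic use of the two commutation identities between the $\bH$-predictable projection and integration against $\langle M\rangle$: they are legitimate only because $\langle M\rangle$ is $\bH$-predictable, and it is precisely there that formula \eqref{eq:hH} in its simple form relies. When $\langle M\rangle$ fails to be $\bH$-predictable the projection cannot be moved inside the $\ud\langle M\rangle$-integral and \eqref{eq:hH} is no longer true as stated; this is what will force Theorem~\ref{th:generalcase} to introduce the $\bH$-dual predictable projection of $\langle M\rangle$ before the same line of reasoning can be run. Everything else is routine: bilinearity, conditioning, the Itô isometry, and Cauchy--Schwarz for the integrability bookkeeping.
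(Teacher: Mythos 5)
Your proposal is correct and follows essentially the same route as the paper: you identify the candidate $U_0=\condespho{\tilde U_0}$, $H^\H={}^p(H^\F)$ and verify the orthogonality characterization of Proposition \ref{prop:GKW}, with the hypothesis that $\langle M\rangle$ is $\bH$-predictable entering exactly through the duality $\esp{\int_0^T Y_t\,\ud \langle M\rangle_t}=\esp{\int_0^T {}^pY_t\,\ud \langle M\rangle_t}$ (Theorem VI.57 of Dellacherie--Meyer), which is also the paper's key tool. The only cosmetic differences are that the paper reduces to $\varphi\equiv 1$ by means of the auxiliary martingales $R^\pm=\int_0^\cdot\sqrt{\varphi^\pm_s}\,\ud M_s$ and a positive/negative splitting of $H^\F$, whereas you use the commutation ${}^p(\varphi H^\F)=\varphi\,{}^p(H^\F)$ directly, and that you make explicit (via conditional Jensen) the square-integrability of ${}^p(H^\F)$, which the paper leaves implicit.
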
  

\begin{proof}
Let
$$
\xi = \tilde U_0 + \int_0^T H_t^\F\ud M_t + \tilde O_T, \quad \P-{\rm a.s.}
$$
be the classical Galtchouk-Kunita-Watanabe decomposition of $\xi \in L^2(\Omega,\F_T,\P;\R)$.
By taking the expectation of $\xi$ with respect to $\H_0$, we have:
\begin{equation} \label{eq:expxi}
\condespho{\xi}=\condespho{\tilde U_0 + \int_0^T H_t^\F\ud M_t} + \condespho{\tilde O_T}.
\end{equation}
Since $(\tilde U_0 + \int_0^\cdot H_t^\F\ud M_t)$ is an $\bF$-martingale, it follows:
\begin{align*}
\condespho{\tilde U_0 + \int_0^T H_t^\F\ud M_t} & = \condespho{\condespfo{\tilde U_0 + \int_0^T H_t^\F\ud M_t}}\\
&=\condespho{\tilde U_0},
\end{align*}
so that we can rewrite \eqref{eq:expxi} as follows:
$$
\condespho{\xi}=\condespho{\tilde U_0}+\condespho{\tilde O_T}.
$$
Moreover, since $\tilde O$ is an $\bF$-martingale null at zero, we have
$$
\condespho{\tilde O_T}=\condespho{\condespfo{\tilde O_T}}=0.
$$
This implies equality \eqref{eq:Uo}. To prove equality \eqref{eq:hH}, we need to calculate the orthogonal
projection of $\xi$ onto the space $\mathcal L_T^\H$, see \eqref{def:subspace}. For the sake of brevity, we suppose that $\tilde U_0 = 0$. Thanks to Proposition \ref{prop:GKW}, this
means we need to check the following condition:
$$
\esp{\xi \int_0^T \varphi_t \ud M_t}=\esp{\int_0^T {}^p(H_t^\F) \ud M_t\int_0^T \varphi_t \ud M_t},
$$
for every $\bH$-predictable process $\varphi$ such that $\esp{\int_0^T|\varphi_t|^2\ud \langle M\rangle_t} < \infty$. Taking decomposition \eqref{eq:GKWf} into account, this corresponds
to the following equality:
\begin{equation}\label{eq:orthog}
\esp{\int_0^T H_t^\F \varphi_t \ud \langle M\rangle_t}=\esp{\int_0^T {}^p(H_t^\F) \varphi_t \ud \langle M\rangle_t},
\end{equation}
for every $\bH$-predictable process $\varphi$ such that $\esp{\int_0^T|\varphi_t|^2\ud \langle M\rangle_t} < \infty$. 
If we write the
process $\varphi$ as follows 
$$
\varphi = \varphi^+ - \varphi^-,
$$
where $\varphi^+$ and $\varphi^-$ denote the positive and the negative part of $\varphi$ respectively, and define the $\bF$-martingales
$$
R_t^+=\int_0^t \sqrt{\varphi^+_s} \ud M_s,\quad R_t^-=\int_0^t \sqrt{\varphi^-_s} \ud M_s, \quad 0 \leq t\leq T,
$$
equality \eqref{eq:orthog} is equivalent to the following relationships:
\begin{align*}
\esp{\int_0^T H_t^\F \ud \langle R^+\rangle_t}&=\esp{\int_0^T {}^p(H_t^\F)\ud \langle R^+\rangle_t}\\
\esp{\int_0^T H_t^\F \ud \langle R^-\rangle_t}&=\esp{\int_0^T {}^p(H_t^\F)\ud \langle R^-\rangle_t}.
\end{align*}
Hence, we can reduce the problem by assuming directly $\varphi_t = 1$ in \eqref{eq:orthog}, for each $t \in [0,T]$. Then, it is enough to prove the
equality
\begin{equation}\label{eq:orthog1}
\esp{\int_0^T H_t^\F\ud \langle M\rangle_t}=\esp{\int_0^T {}^p(H_t^\F)\ud \langle M\rangle_t}.
\end{equation}
Since $\langle M\rangle$ is $\bH$-predictable, Theorem VI.57 in~\cite{dm2}
guarantees that equality \eqref{eq:orthog1} holds, once we have the positivity of the process $H^\F$. By
writing
$$
H^\F=(H^\F)^+ - (H^\F)^-,
$$
and applying the above theorem to the positive and negative parts of $H^\F$, $(H^\F)^+$ and
$(H^\F)^-$ respectively, and to the associated $\bH$-predictable projections, we can get the
result by setting
$$
H^\H:={}^p(H^\F)={}^p((H^\F)^+) - {}^p((H^\F)^-).
$$
\end{proof}
\begin{example}
Let us consider the particular case where $M$ is a square-integrable $\bF$-martingale that is in addition a L\'evy process, $\F_t=\F_t^M$ and $\H_t=\F_{(t-\tau)^+}^M$, with $\tau \in (0,T)$ a fixed delay. We assume $\xi =h(M_T)\in L^2(\Omega,\F_T,\P;\R)$, for some measurable function $h:\R \longrightarrow \R$.\\
In this framework, 
by Lemma \ref{lem:repre} (see Appendix), we know that the integrand appearing in the Galtchouk-Kunita-Watanabe decomposition \eqref{eq:GKWf} can be written as
$$
H_t^\F = F(t,M_{t^-}), \quad t \in [0,T],
$$
where the function $F$ is such that the condition $\esp{\int_0^T|F(t,M_{t^-})|^2\ud  \langle M\rangle_t} < \infty$ is satisfied. Since in this case $\langle M\rangle$ is a deterministic process, we can apply Proposition \ref{prop:Hrepr} and get
$$
H_t^\H={}^pF(t,M_{t^-})=\mathbb E\left[F(t,M_{t^-})|\H_{t-}\right], \quad t \in [0,T].
$$ 
Then, it is easy to derive the following:
$$
H_t^\H=\left\{
\begin{array}{ll}
c(t,M_{(t -\tau)^-}) & \rm{if\ t > \tau}\\
c(t,M_0) & \rm{if\ t\leq \tau},
\end{array}
\right.
$$
where the function $c$ is given by
$$
c(t,y)=\int_\R F(t,y+z)\ud \rho_{t \wedge \tau}(z),
$$
with $\rho_t$ denoting the law of $M_t$, for every $t \in [0,T]$.
\end{example}

\subsection{The $\bH$-predictable dual projection} \label{sec:hpred}

\noindent It is possible to extend the result of Proposition \ref{prop:Hrepr} by using the concept of $\bH$-predictable dual projection. For reasons of clarity, we provide a self-contained discussion about this kind of projection in presence of more than one filtration.
Let $G=(G_t)_{0\leq t\leq T}$ be a c{\`a}dl{\`a}g $\bF$-adapted process of 
integrable variation, that is, $\esp{\|G\|_T}<\infty$. Here the process $\|G\|=(\|G\|)_{0 \leq t\leq T}$ defined, for each $t \in [0,T]$, by
$$
\|G\|_t(\omega)=\sup_{\Delta} \sum_{i=0}^{n(\Delta)-1}|G_{t_i+1}(\omega)-G_{t_i}(\omega)|, 
$$
where $\Delta = \{t_0=0<t_1<\ldots<t_{n(\Delta)}=t\}$ is a partition of $[0,t]$, 
denotes the total variation of the function $t \rightsquigarrow G_t(\omega)$. 
\begin{proposition} \label{th:hpred}
Let $G=(G_t)_{0\leq t\leq T}$ be a c{\`a}dl{\`a}g $\bF$-adapted process
of integrable variation.
Then there exists a unique $\bH$-predictable process $G^\bH=(G_t^\bH)_{0\leq t\leq T}$ of integrable variation, such that
\begin{equation*}\label{eq:hpredvar}
\esp{\int_0^T\varphi_t\ud G_t^\bH}=\esp{\int_0^T\varphi_t\ud G_t},
\end{equation*}
for every $\bH$-predictable (bounded) process $\varphi$. The process $G^\bH$ is called the $\bH$-{\rm predictable
dual projection} of $G$.
\end{proposition}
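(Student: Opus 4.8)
The plan is to obtain $G^\bH$ from the classical correspondence between predictable processes of integrable variation and the measures they generate on the predictable $\sigma$-field, applied here to the $\bH$-predictable $\sigma$-field. The point to keep in mind is that this classical machinery never uses that the generating process is adapted to the filtration defining the predictable $\sigma$-field: I will use $\bF$-adaptedness of $G$ only to guarantee joint measurability in $(\omega,t)$ and the fact that the generated measure ignores $\bH$-evanescent sets.

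First I would associate with $G$ the bounded signed measure $\nu_G$ on $\bigl(\Omega\times[0,T],\F\otimes\mathcal{B}([0,T])\bigr)$ given, for bounded measurable $\varphi$, by $\nu_G(\varphi)=\esp{\int_0^T\varphi_t\,\ud G_t}$ (extended in the usual way from indicators); its total variation is dominated by the finite positive measure $\varphi\mapsto\esp{\int_0^T|\varphi_t|\,\ud\|G\|_t}$, finiteness being exactly $\esp{\|G\|_T}<\infty$. Then I would restrict $\nu_G$, together with its Jordan components $\nu_G^+$ and $\nu_G^-$, to the $\bH$-predictable $\sigma$-field, and check the key property that none of these measures charges an $\bH$-evanescent set: if $B$ is $\bH$-predictable with $\P$-null projection onto $\Omega$, then for $\P$-a.e.\ $\omega$ the section $\{t:(\omega,t)\in B\}$ is empty, hence $\int_0^T\I_B(\omega,t)\,\ud\|G\|_t(\omega)=0$, and therefore $|\nu_G|(B)=0$.

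Next I would invoke the classical representation result (see e.g.\ \cite{dm2}): a bounded positive measure on the $\bH$-predictable $\sigma$-field not charging $\bH$-evanescent sets is generated by a unique $\bH$-predictable increasing process with integrable terminal value. Applied to $\nu_G^+$ and $\nu_G^-$ this produces $\bH$-predictable increasing integrable processes $B^+,B^-$, and I would set $G^\bH:=B^+-B^-$, an $\bH$-predictable process of integrable variation. By construction $\esp{\int_0^T\I_B\,\ud G^\bH_t}=\nu_G^+(B)-\nu_G^-(B)=\esp{\int_0^T\I_B\,\ud G_t}$ for every $\bH$-predictable set $B$, and a monotone-class argument would upgrade this to $\esp{\int_0^T\varphi_t\,\ud G^\bH_t}=\esp{\int_0^T\varphi_t\,\ud G_t}$ for every bounded $\bH$-predictable $\varphi$.

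For uniqueness, if $\widetilde G^\bH$ were another $\bH$-predictable process of integrable variation with the same property, then $D:=G^\bH-\widetilde G^\bH$ would be $\bH$-predictable of integrable variation with $\esp{\int_0^T\varphi_t\,\ud D_t}=0$ for all bounded $\bH$-predictable $\varphi$; choosing $\varphi_t:=\mathrm{sign}\bigl(\ud D_t/\ud\|D\|_t\bigr)$, which is $\bH$-predictable because both $D$ and $\|D\|$ are, would give $\esp{\|D\|_T}=0$, hence $D\equiv 0$ (this is also immediate from the uniqueness clause in the quoted representation result). The only genuinely delicate step is the one described in the second and third paragraphs — running the measure-to-predictable-process correspondence relative to $\bH$ while $G$ is merely $\bF$-adapted; once one has verified that $\bF$-adaptedness enters only through joint measurability and the non-charging property, everything else is the standard dual predictable projection construction.
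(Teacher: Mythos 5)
Your proposal is correct, but it follows a genuinely different route from the paper. You construct $G^\bH$ through the Dol\'eans correspondence between measures on the $\bH$-predictable $\sigma$-field and $\bH$-predictable increasing integrable processes: you form the signed measure $\nu_G(\varphi)=\esp{\int_0^T\varphi_t\,\ud G_t}$, Jordan-decompose it, check that it does not charge $\bH$-evanescent sets, and invoke the representation theorem for each component, with a monotone-class step and a sign-density (or uniqueness-clause) argument closing existence and uniqueness. The paper instead argues pathwise and probabilistically: it reduces to $G$ increasing via the Jordan decomposition of the paths, takes the $\bH$-optional projection $\tilde G_t=\condesph{G_t}$, observes that $\tilde G$ is an $\bH$-submartingale (dominated by the uniformly integrable martingale $\condesph{G_T}$), and defines $G^\bH$ as its Doob--Meyer compensator, verifying the defining identity on generators $\varphi_u=\I_{(s,t]}(u)\I_B$, $B\in\H_s$. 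What your approach buys is that the signed case is handled directly at the level of measures, the monotone-class and uniqueness steps are made explicit rather than left implicit on generators, and it isolates precisely where $\bF$-adaptedness of $G$ enters (only through joint measurability and the non-charging of evanescent sets); what the paper's approach buys is a more elementary toolkit (optional projection plus Doob--Meyer, no Dol\'eans measure theorem) and the additional structural information that $G^\bH$ is exactly the $\bH$-compensator of the $\bH$-optional projection of $G$. The only points in your write-up that deserve a word of care are routine: the $\bH$-predictable density $\ud D_t/\ud\|D\|_t$ used for uniqueness can indeed be chosen $\bH$-predictable because $D$ and $\|D\|$ are, which is a standard but not completely trivial fact, and the Jordan components of $\nu_G$ restricted to $\mathcal P^\H$ need not be the Jordan decomposition of the restriction, though this is harmless since each restricted component is still a bounded positive measure not charging $\bH$-evanescent sets.
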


\begin{proof}
Without loss of generality, we can restrict our attention to the case where $G$ is an increasing process and prove the statement on the generators $\varphi$ of the form $\varphi_u=\I_{(s,t]}(u)\I_B$, with $B \in \H_s$ and $s,t \in [0,T]$ with $s < t$. Indeed, decomposing the process $G$ as $G = G^+ - G^-$, where both the
positive and negative parts of $G$ are assumed to be increasing integrable processes, we can suppose
$G$ to be increasing such that
$$
\esp{G_T}=\esp{\|G\|_T} < \infty.
$$
If $G$ is a c{\`a}dl{\`a}g, increasing, integrable $\bF$-adapted process, we will prove that there exists a unique increasing, integrable $\bH$-predictable process $G^\bH$ such that for every $s,t \in [0,T]$ with $s < t$ and $B \in \H_s$, the following relationship holds
$$
\esp{\I_B(G_t - G_s)}=\esp{\I_B(G_t^\bH - G_s^\bH)}.
$$
Let $\tilde G=(\tilde G_t)_{0\leq t\leq T}$ be the $\bH$-optional projection of $G$, such that for fixed times $t \in (0,T]$
$$
\tilde G_t=\condesph{G_t}\ \P-{\rm a.s.}.
$$
We observe that for every $s,t \in [0,T]$ with $s < t$ and $B \in \H_s$, we have
$$
\esp{\I_B(G_t - G_s)}=\esp{\I_B(\tilde G_t - \tilde G_s)}.
$$
Indeed,
\begin{align*}
\esp{\I_B(G_t - G_s)} & = \esp{\condesphs{\I_B(G_t - G_s)}} = \esp{\I_B\left(\condesphs{G_t} - \tilde G_s\right)}\\
& = \esp{\I_B\left(\condesphs{\tilde G_t} - \tilde G_s\right)} = \esp{\condesphs{\I_B(\tilde G_t - \tilde G_s)}}\\
& = \esp{\I_B(\tilde G_t - \tilde G_s)}.
\end{align*}
Furthermore, since $G$ is increasing, then $\tilde G$ is an $\bH$-submartingale, that is
$$
\condesphs{\tilde G_t} = \condesphs{\condesph{G_t}} = \condesphs{G_t} \ge \condesphs{G_s}=\tilde G_s, \quad 0\leq s \leq t \leq T.
$$
Thanks to Doob-Meyer Theorem on decomposition of submartingales, see e.g. Theorem 3.15 of~\cite{js}, there exists a unique increasing, integrable $\bH$-predictable process $G^\bH$ such that $\tilde G - G^\bH$ is an $\bH$-martingale, that is, for every $s,t \in [0,T]$ with $s < t$ and $B \in \H_s$, we have
$$
\esp{\I_B(\tilde G_t - \tilde G_s)}=\esp{\I_B(G_t^\bH - G_s^\bH)}.
$$
\end{proof}

\begin{remark} \label{rem:proj}
If $G$ is an 
$\bH$-predictable process of integrable variation and $X$ is 
an $\bF$-adapted process satisfying $\esp{\int_0^T X_t \ud G_t} < \infty$, then
$$
(X_t \ud G_t)^\bH={}^pX_t \ud G_t, \quad \P-{\rm a.s.},\ \mbox{for\ every}\ t \in [0,T].
$$ 
Indeed, by Theorem VI.57 in~\cite{dm2}, for any $\bH$-predictable (bounded) process $\varphi$ we can prove that
$$
\esp{\int_0^T \varphi_t X_t \ud G_t}=\esp{\int_0^T \varphi_t {}^pX_t \ud G_t}.
$$
\end{remark}

\subsection{Explicit representation results}

We now can apply the results of Subsection \ref{sec:hpred} to extend Proposition \ref{prop:Hrepr}.
Let $\mathcal P^\H$ and $\mathcal P$ be the $\bH$-predictable and $\bF$-predictable $\sigma$-field respectively. We consider the measures $\mu^\H$ (respectively $\mu$) defined on $\mathcal P^\H$ (respectively
$\mathcal P$) such that
\begin{equation}\label{eq:measureMuh}
\mu^\H((s,t] \times B)=\esp{\I_B(A_t^\bH - A_s^\bH)}, \quad B \in \H_s,\ \ s,t \in [0,T],\ s<t,
\end{equation}
where $A^\bH$ is the $\bH$-predictable dual projection of $A
:=(\int_0^t H_s^\F \ud \langle M\rangle_s)_{0\leq t \leq T}$,
that exists thanks to Theorem \ref{th:hpred}, and
\begin{equation}\label{eq:measureMuf}
\mu((u,v] \times F)=\esp{\I_F(\langle M\rangle_v^\bH - \langle M\rangle_u^\bH)}, \quad F \in \F_u,\ u,v \in [0,T],\ u<v.
\end{equation}
Here $H^\F$ is the integrand appearing in the Galtchouk-Kunita-Watanabe decomposition \eqref{eq:GKWf}.
\begin{lemma} \label{lem:abscont}
Let $\mu^\H$ and $\mu$ measures satisfying conditions \eqref{eq:measureMuh} and \eqref{eq:measureMuf} respectively.
Then $\mu^\H \ll \mu$ on $\mathcal P^\H$, that is, $\mu^\H$ is absolutely continuous with respect to the
restriction of $\mu$ on $\mathcal P^\H$.
\end{lemma}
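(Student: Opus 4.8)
The plan is to reduce the statement to the defining property of the $\bH$-predictable dual projection together with a pointwise (in $\omega$) domination argument. First I would record the integrability facts: since $M$ is square-integrable, $\esp{\langle M\rangle_T}<\infty$, so $\langle M\rangle$ is a c{\`a}dl{\`a}g increasing $\bF$-adapted process of integrable variation and $\langle M\rangle^\bH$ exists by Proposition~\ref{th:hpred}; moreover, by the Cauchy--Schwarz inequality,
\[
\esp{\int_0^T|H^\F_s|\,\ud\langle M\rangle_s}\le\Big(\esp{\int_0^T|H^\F_s|^2\,\ud\langle M\rangle_s}\Big)^{1/2}\Big(\esp{\langle M\rangle_T}\Big)^{1/2}<\infty,
\]
so that $A=\big(\int_0^tH^\F_s\,\ud\langle M\rangle_s\big)_{0\le t\le T}$ has integrable variation and $A^\bH$ exists. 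Since every $\mathcal P^\H$-generator $(s,t]\times B$, $B\in\H_s$, is also a $\mathcal P$-generator (as $\H_s\subseteq\F_s$), testing Proposition~\ref{th:hpred} against the bounded $\bH$-predictable processes $\varphi=\I_C$ and extending from the generating semiring in the standard way gives, for every $C\in\mathcal P^\H$,
\[
\mu^\H(C)=\esp{\int_0^T\I_C(t,\cdot)\,\ud A^\bH_t},\qquad
\mu(C)=\esp{\int_0^T\I_C(t,\cdot)\,\ud\langle M\rangle^\bH_t}=\esp{\int_0^T\I_C(t,\cdot)\,\ud\langle M\rangle_t}.
\]

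Because $A$ need not be monotone, $\mu^\H$ is a priori a signed measure, so I would next perform a sign reduction. Writing $H^\F=(H^\F)^+-(H^\F)^-$ and $A^\pm_t:=\int_0^t(H^\F_s)^\pm\,\ud\langle M\rangle_s$, both $A^+$ and $A^-$ are c{\`a}dl{\`a}g increasing $\bF$-adapted processes of integrable variation with $A=A^+-A^-$; by uniqueness in Proposition~\ref{th:hpred}, $A^\bH=(A^+)^\bH-(A^-)^\bH$, and $(A^\pm)^\bH$ are increasing (this is exactly the Doob--Meyer step in the proof of Proposition~\ref{th:hpred}). Hence $\mu^\H=\mu^{\H,+}-\mu^{\H,-}$, where $\mu^{\H,\pm}$ are the nonnegative measures on $\mathcal P^\H$ defined by $\mu^{\H,\pm}(C)=\esp{\int_0^T\I_C\,\ud(A^\pm)^\bH}=\esp{\int_0^T\I_C(t,\cdot)(H^\F_t)^\pm\,\ud\langle M\rangle_t}$, and $|\mu^\H|\le\mu^{\H,+}+\mu^{\H,-}$. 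It therefore suffices to prove $\mu^{\H,+}\ll\mu$ and $\mu^{\H,-}\ll\mu$ on $\mathcal P^\H$.

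Finally, fix $C\in\mathcal P^\H$ with $\mu(C)=0$. The representation above gives $\esp{\int_0^T\I_C(t,\omega)\,\ud\langle M\rangle_t(\omega)}=0$, and since $t\mapsto\langle M\rangle_t$ is nondecreasing this forces $\int_0^T\I_C(t,\omega)\,\ud\langle M\rangle_t(\omega)=0$ for $\P$-a.e.\ $\omega$; that is, for a.e.\ $\omega$ the section $\{t:(t,\omega)\in C\}$ is $\ud\langle M\rangle_\cdot(\omega)$-null. Multiplying the integrand by the nonnegative $(H^\F_\cdot(\omega))^\pm$ changes nothing, so $\int_0^T\I_C(t,\omega)(H^\F_t(\omega))^\pm\,\ud\langle M\rangle_t(\omega)=0$ for a.e.\ $\omega$, whence $\mu^{\H,\pm}(C)=0$ and therefore $|\mu^\H|(C)=0$, which is the claimed absolute continuity. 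I expect the only delicate points to be bookkeeping ones: noticing that $\mu^\H$ is signed and reducing to its nonnegative parts, and justifying the passage from the generator formulas~\eqref{eq:measureMuh}--\eqref{eq:measureMuf} to the integral representations above; the analytic core is simply that a $\ud\langle M\rangle$-null section stays null after multiplication by $(H^\F)^\pm$.
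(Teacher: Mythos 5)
Your proof is correct and takes essentially the same route as the paper: both transfer $\mu(E)=0$ via the dual-projection identity to $\esp{\int_0^T\I_E\,\ud\langle M\rangle_t}=0$, deduce that $\I_E$ vanishes $\ud\langle M\rangle\otimes\ud\P$-a.e., and conclude $\mu^\H(E)=\esp{\int_0^T\I_E\,H_t^\F\,\ud\langle M\rangle_t}=0$. Your additional bookkeeping (Cauchy--Schwarz integrability of $A$, the $(H^\F)^\pm$ split to handle the signed measure) merely makes explicit what the paper's proof leaves implicit.
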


\begin{proof}
By using the definition of absolute continuity, we wish to show that if whenever $\mu(E) = 0$ for $E \in \mathcal P^\H$, then $\mu^\H(E) = 0$. Let $\psi=(\psi_t)_{0\leq t\leq T}$ be a nonnegative $\bH$-predictable process such that
$$
\esp{\int_0^T \psi_t \ud  \langle M\rangle_t^\bH}=0.
$$
Then
$$
\esp{\int_0^T \psi_t \ud  \langle M\rangle_t}=0,
$$
that implies that $\psi = 0\ \ud \langle M \rangle 
\otimes \ud \P$ a.e., since $\psi$ is nonnegative. Finally
\begin{equation*}\label{eq:expAh}
\esp{\int_0^T \psi_t \ud  A_t^\bH}=\esp{\int_0^T \psi_t \ud  A_t}=\esp{\int_0^T \psi_t H_t^\F \ud  \langle M\rangle_t}=0.
\end{equation*}
\end{proof}

\noindent Since $\mu^\H \ll \mu$ on $\mathcal P^\H$, thanks to Lemma \ref{lem:abscont}, by the Radon-Nikodym theorem there
exists a $\mathcal P^\H$-measurable function $g$ on $[0,T] \times \Omega$ 
such that
$$
\mu^\H(E)=\int_E g(t,\omega) \ud \mu(t,\omega), \quad \forall E \in \mathcal P^\H.
$$
This allows to identify the process $H^\H$ as the Radon-Nikodym derivative:
\begin{equation} \label{eq:radnik}
H_t^\H(\omega):= \frac{\ud \mu^\H(t,\omega)}{\ud \mu(t,\omega)}\bigg{|}_{\mathcal P^\H}, \quad (t,\omega) \in [0,T] \times \Omega.
\end{equation}
Finally, we are ready to state the following theorem.
\begin{theorem} \label{th:general}
For any
nonnegative $\bF$-measurable process $H^\F$, the following equality holds
\begin{equation} \label{eq:hfcond}
\esp{\int_0^T \varphi_t H_t^\F \ud \langle M\rangle_t}=\esp{\int_0^T \varphi_t H_t^\H \ud \langle M\rangle_t},
\end{equation}
for every $\bH$-predictable process $\varphi$ such that $\esp{\int_0^T|\varphi_t|^2\ud \langle M\rangle_t} < \infty$. Here $H^\H$ is given by \eqref{eq:radnik}.
\end{theorem}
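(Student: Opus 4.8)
The plan rests on making the two measures $\mu^\H$ and $\mu$ on $\mathcal P^\H$ completely explicit. First I would observe that on the $\pi$-system of rectangles $(s,t]\times B$ with $s<t$ and $B\in\H_s$ --- which generates $\mathcal P^\H$ --- the process $\I_{(s,t]}\I_B$ is $\bH$-predictable, so the defining property of the $\bH$-predictable dual projection (Proposition~\ref{th:hpred}) applied to $\langle M\rangle$ gives $\mu((s,t]\times B)=\esp{\I_B(\langle M\rangle_t^\bH-\langle M\rangle_s^\bH)}=\esp{\I_B(\langle M\rangle_t-\langle M\rangle_s)}$; in other words $\mu$ agrees on $\mathcal P^\H$ with the (finite, since $M$ is square-integrable) Dol\'eans measure $\mu^M$ of $\langle M\rangle$. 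Applying Proposition~\ref{th:hpred} the same way to $A=(\int_0^\cdot H_r^\F\ud\langle M\rangle_r)$ --- which is a c{\`a}dl{\`a}g $\bF$-adapted increasing process of integrable variation precisely because $H^\F\ge 0$ and $\esp{\int_0^T H_r^\F\ud\langle M\rangle_r}<\infty$, so that $A^\bH$ exists --- yields $\mu^\H((s,t]\times B)=\esp{\I_B(A_t^\bH-A_s^\bH)}=\esp{\I_B(A_t-A_s)}=\esp{\int_0^T\I_{(s,t]}(r)\I_B\,H_r^\F\ud\langle M\rangle_r}$. By a monotone-class argument (both measures being finite) these identities extend from the rectangles to all of $\mathcal P^\H$, so that $\mu=\mu^M$ and $\mu^\H=H^\F\cdot\mu^M$ on $\mathcal P^\H$; together with Lemma~\ref{lem:abscont} this shows that the Radon-Nikodym derivative \eqref{eq:radnik} is exactly the $\mu^M$-conditional expectation of $H^\F$ given $\mathcal P^\H$.

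With this identification \eqref{eq:hfcond} becomes almost automatic. For $\varphi=\I_E$ with $E\in\mathcal P^\H$ it is the equality $\mu^\H(E)=\int_E H^\H\ud\mu$, true by construction; by linearity it holds for $\bH$-predictable step processes, and then by monotone convergence for all nonnegative $\bH$-predictable $\varphi$. To reach a general (signed) $\bH$-predictable $\varphi$ with $\esp{\int_0^T|\varphi_r|^2\ud\langle M\rangle_r}<\infty$ I would note that $\varphi\in L^2(\mathcal P^\H,\mu^M)$ and that both sides of \eqref{eq:hfcond} are continuous linear functionals on that space by Cauchy-Schwarz, using $H^\F\in L^2(\mu^M)$ (automatic, $H^\F$ being the integrand of \eqref{eq:GKWf}) and $H^\H\in L^2(\mu^M)$, the latter by Jensen's inequality for the conditional expectation: $\esp{\int_0^T(H_r^\H)^2\ud\langle M\rangle_r}=\int(H^\H)^2\ud\mu^M\le\int(H^\F)^2\ud\mu^M<\infty$. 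Since $\bH$-predictable step processes are dense in $L^2(\mathcal P^\H,\mu^M)$, passing to the limit gives \eqref{eq:hfcond}, after recalling that for $\bH$-predictable $\varphi$ one has $\esp{\int_0^T\varphi_r H_r^\F\ud\langle M\rangle_r}=\int\varphi H^\F\ud\mu^M$ and $\esp{\int_0^T\varphi_r H_r^\H\ud\langle M\rangle_r}=\int\varphi H^\H\ud\mu^M$. By uniqueness in Proposition~\ref{prop:GKW} (the orthogonality $\esp{O_T\cdot\eta}=0$, $\eta\in\mathcal L_T^\H$, combined with \eqref{eq:GKWf}, amounts exactly to \eqref{eq:hfcond}) the process \eqref{eq:radnik} then coincides with the integrand $H^\H$ of the Galtchouk-Kunita-Watanabe decomposition under partial information; and the restriction $H^\F\ge 0$ is removed as in the proof of Proposition~\ref{prop:Hrepr}, splitting $H^\F=(H^\F)^+-(H^\F)^-$.

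The steps that need care rather than cleverness are: checking that $A$ satisfies the hypotheses of Proposition~\ref{th:hpred}; carrying out the monotone-class extension of $\mu=\mu^M$ and $\mu^\H=H^\F\cdot\mu^M$ from the generating rectangles to all of $\mathcal P^\H$; and, whenever Proposition~\ref{th:hpred} is invoked with an unbounded $\bH$-predictable integrand such as $\varphi H^\H$, truncating first and using monotone convergence (legitimate since $H^\H\ge 0$ $\mu$-a.e., both $\mu^\H$ and $\mu$ being nonnegative). I expect the genuine obstacle to be bookkeeping: one must keep the two measures $\mu$ and $\mu^M$ and the two $\sigma$-fields $\mathcal P$ and $\mathcal P^\H$ carefully apart --- in particular $H^\F$ lives on $\mathcal P$ and is not $\mathcal P^\H$-measurable, so the passage $\int\varphi H^\F\ud\mu^M=\int\varphi H^\H\ud\mu^M$ for $\mathcal P^\H$-measurable $\varphi$ must be justified by the conditional-expectation property and not by direct substitution. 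Once that is in order, the theorem follows.
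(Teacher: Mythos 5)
Your proposal is correct and follows essentially the same route as the paper: both verify \eqref{eq:hfcond} on the generating rectangles $(s,t]\times B$, $B\in\H_s$, of $\mathcal P^\H$ by combining the defining identities of the two dual projections with the Radon-Nikodym relation \eqref{eq:radnik}. The only difference is that you spell out what the paper leaves implicit --- the monotone-class/$L^2$-density extension from generators to general square-integrable $\bH$-predictable $\varphi$, the truncation needed when the dual-projection property is invoked with an unbounded integrand, and the reading of $H^\H$ as a conditional expectation of $H^\F$ under the Dol\'eans measure --- which is a welcome completion rather than a new argument.
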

\begin{proof}
By relationship \eqref{eq:radnik} and definition of the measure $\mu$, see \eqref{eq:measureMuf}, we have for every $s,t \in [0,T]$ with $s < t$ and $B \in \H_s$
$$
\mu^\H((s,t] \times B) = \int_s^t\int_B H_u^\H(\omega)\ud \mu(u,\omega) =\esp{ \I_B \int_s^t  H_u^\H\ud \langle M\rangle_u^\bH}= \esp{ \I_B\int_s^t  H_u^\H\ud \langle M\rangle_u}.
$$ %
On the other hand, by \eqref{eq:measureMuh}
$$
\mu^\H((s,t] \times B) = \esp{\I_B(A_t^\bH - A_s^\bH)} = \esp{\I_B \int_s^t H_u^\F \ud \langle M\rangle_u}.
$$
If $\varphi$ is of the form $\varphi_u=\I_{(s,t]}(u)\I_B$, with $B \in \H_s$ and $s,t\in [0,T]$ with $s < t$, then the statement is proved since relationship \eqref{eq:hfcond} is verified on the generators of $\mathcal P^\H$.
\end{proof} 
\noindent 
We now give the analogous of Proposition \ref{prop:Hrepr}, without assuming that the process $\langle M \rangle$ is $\bH$-predictable.
\begin{theorem} \label{th:generalcase}
Let $(\tilde U_0,H^\F,\tilde O_T)$ be the triplet corresponding to decomposition \eqref{eq:GKWf} of $\xi \in L^2(\Omega,\F_T,\P;\R)$. Then
\begin{equation}\label{eq:GKWgeneral}
\xi = U_0 + \int_0^T H_t^\H\ud M_t + O_T, \quad \P-{\rm a.s.},
\end{equation}
with
\begin{align}
U_0&=\condespho{\tilde U_0}, \nonumber\\
H_t^\H&= \frac{\ud \mu^\H(t,\omega)}{\ud \mu(t,\omega)}\bigg{|}_{\mathcal P^\H}, \quad (t,\omega) \in [0,T] \times \Omega \nonumber,
\end{align}
where $\mu^\H$ and $\mu$ are given in \eqref{eq:measureMuh} and \eqref{eq:measureMuf} respectively, 
and $O$ is a square-integrable $\bF$-martingale with $O_0=0$ such that $\esp{O_T \cdot \eta}=0$, for every $\eta \in \mathcal L_T^\H$. 
\end{theorem}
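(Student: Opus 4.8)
The plan is to reduce everything to the abstract projection result of Proposition~\ref{prop:GKW} together with the identity~\eqref{eq:hfcond} established in Theorem~\ref{th:general}. By Proposition~\ref{prop:GKW}, $\xi$ admits a unique decomposition $\xi = U_0 + \int_0^T \bar H_t\,\ud M_t + O_T$ with $U_0 \in \H_0$, $\bar H$ an $\bH$-predictable process in $L^2(\Omega,\ud\langle M\rangle\otimes\ud\P)$, and $O$ a square-integrable $\bF$-martingale with $O_0=0$ orthogonal to $\mathcal L_T^\H$; this is precisely the orthogonal projection of $\xi$ onto the closed subspace $\mathcal L_T^\H$. Since $O = \xi - U_0 - \int_0^T \bar H_t\,\ud M_t$ is determined by $U_0$ and $\bar H$, and its orthogonality to $\mathcal L_T^\H$ is already part of Proposition~\ref{prop:GKW}, the whole content of the theorem is the identification $U_0 = \condespho{\tilde U_0}$ and $\bar H_t = \ud\mu^\H/\ud\mu|_{\mathcal P^\H}$ up to $\ud\langle M\rangle\otimes\ud\P$-null sets.

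The identification of $U_0$ is verbatim the argument in the proof of Proposition~\ref{prop:Hrepr}: taking $\H_0$-conditional expectations in~\eqref{eq:GKWf}, using that $\tilde U_0 + \int_0^\cdot H_t^\F\,\ud M_t$ is an $\bF$-martingale and that $\condespfo{\tilde O_T}=0$, one gets $\condespho{\xi}=\condespho{\tilde U_0}$, while $U_0 = \condespho{\xi}$ by Proposition~\ref{prop:GKW}; note this step uses no property of $\langle M\rangle$. For $\bar H$, as in the proof of Proposition~\ref{prop:Hrepr} we may assume $\tilde U_0 = 0$, hence $U_0 = 0$ (the $\F_0$-measurable remainder $\tilde U_0 - \condespho{\tilde U_0}$ is orthogonal to $\mathcal L_T^\H$ and is absorbed into $O$). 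The projection property of $\bar H$ reads $\esp{(\xi - \int_0^T \bar H_t\,\ud M_t)\int_0^T\varphi_t\,\ud M_t}=0$ for every $\bH$-predictable $\varphi$ with $\esp{\int_0^T|\varphi_t|^2\ud\langle M\rangle_t}<\infty$; inserting~\eqref{eq:GKWf}, using $\langle\tilde O,M\rangle=0$, and applying the It\^o isometry to both stochastic integrals, this is equivalent to
$$
\esp{\int_0^T \varphi_t H_t^\F\,\ud\langle M\rangle_t}=\esp{\int_0^T \varphi_t \bar H_t\,\ud\langle M\rangle_t}
$$
for all such $\varphi$. Exactly as in Proposition~\ref{prop:Hrepr}, writing $\varphi=\varphi^+-\varphi^-$, $H^\F=(H^\F)^+-(H^\F)^-$ and passing to the auxiliary $\bF$-martingales $R^\pm=\int_0^\cdot\sqrt{\varphi_s^\pm}\,\ud M_s$, it suffices to verify this when $H^\F\ge 0$ and $\varphi$ ranges over the generators $\varphi_u=\I_{(s,t]}(u)\I_B$, $B\in\H_s$; and this is exactly the content of Theorem~\ref{th:general}, with $\bar H$ replaced by the Radon-Nikodym process $H^\H$ of~\eqref{eq:radnik}.

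It then remains to upgrade~\eqref{eq:hfcond} from the generators to all admissible $\varphi$ (a monotone-class argument for bounded $\bH$-predictable $\varphi$, then an $L^2(\ud\langle M\rangle\otimes\ud\P)$-truncation/density argument) and to test the resulting identity against $\varphi_t=\mathrm{sign}(\bar H_t-H_t^\H)\I_{\{|\bar H_t-H_t^\H|\le n\}}$, letting $n\to\infty$, to conclude $\bar H = H^\H$ $\ud\langle M\rangle\otimes\ud\P$-a.e.; in particular $H^\H\in L^2(\Omega,\ud\langle M\rangle\otimes\ud\P)$, so $\int_0^T H_t^\H\,\ud M_t$ is well defined and~\eqref{eq:GKWgeneral} follows from Proposition~\ref{prop:GKW}. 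I expect the main obstacle to be exactly this last identification, together with the well-posedness of the Radon-Nikodym process: one must first check that $A=\int_0^\cdot H_s^\F\,\ud\langle M\rangle_s$ (or its positive and negative parts) is of integrable variation so that Theorem~\ref{th:hpred} applies — which follows from $H^\F\in L^2(\ud\langle M\rangle\otimes\ud\P)$ and $\esp{\langle M\rangle_T}<\infty$ by Cauchy--Schwarz — and then make sure that the splitting of the signed integrand $H^\F$ into positive and negative parts is compatible with the Radon-Nikodym formula~\eqref{eq:radnik}, since Theorem~\ref{th:general} is stated only for nonnegative $H^\F$. Everything else is routine bookkeeping.
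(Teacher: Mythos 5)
Your proposal is correct and takes essentially the same route as the paper: the paper's own proof is literally ``proceed as in Proposition~\ref{prop:Hrepr}, with condition~\eqref{eq:hfcond} of Theorem~\ref{th:general} playing the role of~\eqref{eq:orthog}'', which is exactly your combination of Proposition~\ref{prop:GKW} with Theorem~\ref{th:general}. The only (harmless) difference is directional --- you identify the abstract projection integrand with the Radon--Nikodym process by testing the difference against truncated sign processes, whereas the paper checks that the Radon--Nikodym candidate realizes the orthogonal projection --- and your remarks on the integrable variation of $A$, the passage from generators to general $\varphi$, and the signed splitting of $H^\F$ simply make explicit what the paper leaves implicit.
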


\begin{proof}
We proceed as in the proof of Proposition \ref{prop:Hrepr} by observing that condition \eqref{eq:hfcond} plays the same role of condition \eqref{eq:orthog}.
\end{proof}
\noindent
In the next proposition we give a useful result which allows us to compute $H^\H$ as the Radon-Nikodym derivative of the $\bH$-predictable dual projection $A^\bH$ of the process $A=(\int_0^t H_s^\F \ud \langle M\rangle_s)_{0\leq t \leq T}$ with respect to the $\bH$-predictable dual projection $\langle M \rangle^\bH$ of the $\bF$-predictable quadratic variation $\langle M \rangle$.
 
 \begin{proposition}   \label{formula}
The process $A^\bH = (\int_0^.H_s^\F \ud \langle M\rangle_s)^\bH$ is absolutely continuous with respect to $\langle M \rangle^\bH$ and it is given by 
 $$
 A^\bH_t =  \int_0^t H^\H_s \ud \langle M \rangle_s^\bH, \quad 0 \leq t \leq T.
 $$
As a consequence 
 \begin{equation} \label{eq:radnik1}
H_t^\H = \frac{\ud A^\bH_t}{\ud \langle M \rangle_t^\bH}, \quad 0 \leq t \leq T.
\end{equation}
  \end{proposition}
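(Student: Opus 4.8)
The plan is to identify $A^\bH$ with the process $B_t:=\int_0^t H^\H_s\ud\langle M\rangle_s^\bH$ by checking that $B$ enjoys the characterizing property of the $\bH$-predictable dual projection of $A:=\big(\int_0^\cdot H^\F_s\ud\langle M\rangle_s\big)$ and then invoking the uniqueness part of Proposition~\ref{th:hpred}. First I would make sure $B$ is well defined as an $\bH$-predictable process of integrable variation: since $H^\H$ is $\mathcal P^\H$-measurable (it is the Radon--Nikodym density \eqref{eq:radnik}) and $\langle M\rangle^\bH$ is $\bH$-predictable and increasing, $B$ is $\bH$-predictable, and its expected total variation is $\esp{\int_0^T|H^\H_t|\ud\langle M\rangle_t^\bH}$, which by the defining property of $\langle M\rangle^\bH$ applied to the nonnegative $\bH$-predictable process $|H^\H|$ equals $\esp{\int_0^T|H^\H_t|\ud\langle M\rangle_t}$; this is finite by Cauchy--Schwarz together with $\esp{\int_0^T|H^\H_t|^2\ud\langle M\rangle_t}<\infty$ and $\esp{\langle M\rangle_T}=\esp{M_T^2-M_0^2}<\infty$. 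The same estimate (with $H^\F$ in place of $H^\H$) shows $A$ has integrable variation, so $A^\bH$ exists.

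Next, for an arbitrary bounded $\bH$-predictable $\varphi$ I would run the chain
$$
\esp{\int_0^T\varphi_t\ud A_t^\bH}=\esp{\int_0^T\varphi_t\ud A_t}=\esp{\int_0^T\varphi_t H^\F_t\ud\langle M\rangle_t}=\esp{\int_0^T\varphi_t H^\H_t\ud\langle M\rangle_t}=\esp{\int_0^T\varphi_t H^\H_t\ud\langle M\rangle_t^\bH}=\esp{\int_0^T\varphi_t\ud B_t},
$$
where the first equality is the definition of $A^\bH$, the second the definition of $A$, the third is Theorem~\ref{th:general} (equation~\eqref{eq:hfcond}), and the fourth is the defining property of $\langle M\rangle^\bH$ applied to the $\bH$-predictable integrand $\varphi H^\H$. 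Since Theorem~\ref{th:general} and the dual-projection identities are stated for nonnegative integrands, this computation must first be carried out after the standard decompositions $H^\F=(H^\F)^+-(H^\F)^-$, $\varphi=\varphi^+-\varphi^-$ and $H^\H=(H^\H)^+-(H^\H)^-$, the integrability established above guaranteeing that each piece is separately finite so the decompositions are legitimate; this bookkeeping is the only genuinely delicate point.

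Finally, $B$ is itself $\bH$-predictable of integrable variation, hence coincides with its own $\bH$-predictable dual projection, and the displayed identity shows that $B$ also satisfies the property characterizing $A^\bH$ in Proposition~\ref{th:hpred}; by the uniqueness in that proposition we conclude $A_t^\bH=B_t=\int_0^t H^\H_s\ud\langle M\rangle_s^\bH$ for all $t\in[0,T]$, up to indistinguishability. Absolute continuity of $A^\bH$ with respect to $\langle M\rangle^\bH$ is then immediate, and reading off the density yields \eqref{eq:radnik1}. As noted, the main obstacle is not conceptual but the careful sign and integrability bookkeeping required to legitimately apply Theorem~\ref{th:general} and the defining relations of $\langle M\rangle^\bH$ and $A^\bH$, all of which are phrased for nonnegative processes.
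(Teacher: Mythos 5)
Your proposal is correct and follows essentially the same route as the paper: you define the candidate process $\int_0^\cdot H^\H_s\,\ud\langle M\rangle_s^\bH$, verify that it acts on $\bH$-predictable test processes exactly as $A$ does (the paper checks this directly on the generators $\I_{(s,t]}\I_B$ via the measures $\mu$, $\mu^\H$ and \eqref{eq:radnik}, while you route the same computation through Theorem \ref{th:general} and the defining property of $\langle M\rangle^\bH$), and conclude by uniqueness of the $\bH$-predictable dual projection from Proposition \ref{th:hpred}. Your added integrability and sign bookkeeping is a harmless refinement of the same argument.
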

 
 \begin{proof}
Set $\widetilde  A_t := \int_0^t H^\H_s \ud \langle M \rangle_s^\bH$, for each $t \in [0,T]$. It is sufficient to prove that 
$$\esp{\int_0^T \varphi_u \ud A_u}=\esp{\int_0^T \varphi_u  \ud \widetilde  A_u}$$
for every $\bH$-predictable (bounded) process $\varphi$. 
As before, we can consider $\varphi$ of the form $\varphi_u=\I_{(s,t]}(u)\I_B$, with $B \in \H_s$ and $s < t \in [0,T]$.

\noindent
Then by the definitions of the measure $\mu$ and $\mu^\H$, see \eqref{eq:measureMuf} and \eqref{eq:measureMuh}, and recalling  \eqref{eq:radnik} we get
$$
\esp{\int_0^T \varphi_u  \ud \widetilde  A_u} = \esp{ \I_B \int_s^t  H_u^\H\ud \langle M\rangle_u^\bH}=  \int_s^t\int_B H_u^\H(\omega)\ud \mu(u,\omega) = \mu^\H((s,t] \times B)=$$
$$\esp{\I_B(A_t^\bH - A_s^\bH)} = \esp{\I_B \int_s^t H_u^\F \ud \langle M\rangle_u} = \esp{\int_0^T \varphi_u \ud A_u}$$
which concludes the proof.
 \end{proof}
\begin{example}
Suppose that the process $\langle M \rangle$ is of the form
$$
\langle M \rangle_t=\int_0^ta_s\ud G_s,  \quad 0\leq t\leq T
$$
for some $\bF$-predictable process $a=(a_t)_{0\leq t\leq T}$ and an increasing deterministic function $G$. Then by Remark \ref{rem:proj} 
$$\langle M\rangle_t ^\bH =  \int_0^t {}^pa_s \ud G_s, \quad A^\bH_t =  \int_0^t {}^p(H_s^\F a_s)\ud G_s, \quad 0 \leq t \leq T,$$
and as a consequence of Proposition \ref{formula} we get
$$
H_t^\H=\frac{{}^p(H_t^\F a_t)}{{}^pa_t},\mathbb \quad 0\leq t\leq T.
$$

\end{example}

\begin{remark}
Let us observe that if the process $\langle M \rangle$ is $\bH$-predictable, then again by Remark \ref{rem:proj} 
$$\langle M\rangle_t ^\bH =  \langle M\rangle_t ,  \quad A^\bH_t =  \int_0^t {}^p(H_s^\F)\ud  \langle M\rangle_s, \quad 0 \leq t \leq T,$$
and by applying Proposition  \ref{formula} we obtain that
$$
H_t^\H={}^p(H_t^\F), \quad 0\leq t\leq T.
$$
 \end{remark}

\section{Risk-minimization under restricted information} \label{sec:rm}

In relation to the connection between risk-minimization under full and partial information respectively, we now show how our result obtained in Proposition \ref{formula} fits in the approach of~\cite{s94} of risk-minimization under restricted information.\\
On a probability space $(\Omega,\F,\P)$ we consider a financial market with  one riskless asset with (discounted) price $1$ and one risky asset whose (discounted) price  is given by a square-integrable (c{\`a}dl{\`a}g) martingale $M=(M_t)_{0\leq t\leq T}$ adapted to  a (large) filtration 
$\bF:= (\F_t)_{0\leq t\leq T}$. \\
We will study the problem of hedging a contingent claim,  whose final payoff is given by a random variable $\xi \in L^2(\Omega,\F_T,\P;\R)$, in the case where investors acting in the market can access only to the information flow   
$\bH := (\H_t)_{0\leq t\leq T}$ with $\H_t \subseteq \F_t$, for each $t \in [0,T]$.\\
We choose the risk-minimization approach to solve the above hedging problem. In the case of full information, in~\cite{fs86} the authors proved 
that there exists  a unique $\bF$-risk-minimizing hedging strategy $\phi^*=(\theta^*, \eta^*)$, where  $\theta^*=(\theta^*_t)_{0\leq t\leq T}$ is given by the integrand with respect to $M$ in the classical Galtchouk-Kunita-Watanabe  decomposition of $\xi$, i.e. $\theta^*= H^\F$ (see equation (\ref{eq:GKWf})).\\
In this section we extend this result to the case where there are restrictions on the available information, by using the Galtchouk-Kunita-Watanabe  decomposition
under partial information (see equation \eqref{eq:GKW}). More precisely, we prove that the $\bH$-risk-minimizing hedging strategy $\phi^{\H}=(\theta^{\H}, \eta^{\H})$ (see Definition \ref{def2} below) is such that $\theta^{\H}= H^\H$.

Risk-minimization under restricted information was studied in~\cite{s94} by using a different approach. We obtain the same explicit representation given in  Theorem 3.1 of~\cite{s94} by applying Proposition \ref{formula}.
About risk-minimization under partial information for defaultable markets via nonlinear filtering, we refer to~\cite{fs2012}. In particular, they consider the case where the contingent claim $\xi$ is $\H_T$-measurable, in which we can solve the risk-minimization problem by using the classical Galtchouk-Kunita-Watanabe  decomposition.\\
We now assume that the agent has at her/his disposal  the information flow $\bH$ about trading in stocks while a complete information about trading in the riskless asset. 

\begin{definition} \label{def1} 
 An $\bH$-strategy is a pair $\phi=(\theta, \eta)$ ($\theta_t$ is the number of shares  of the risky asset to be held at time $t$, while $\eta_t$  is the amount invested in the riskless asset at time $t$) where $\theta$ is $\bH$-predictable and $\eta$ is $\bF$-adapted and such that 
$$ 
\esp{\int_0^T  \theta^2_s \ud \langle M \rangle_s} < \infty
$$
and the value process $V(\phi) := \theta M + \eta$  satisfies 
$$
\esp{\left(\sup_{t \in [0,T]} | V_t(\phi) |\right)^2}< \infty.
$$
\end{definition}
\noindent For any $\bH$-strategy $\phi$, the associated cost process $C(\phi)$ is given by
$$
C_t(\phi):= V_t(\phi) - \int_0^t \theta_r \ud M_r, \quad 0 \leq t \leq T.
$$
Finally the $\bH$-risk process of $\phi$ is defined by
$$
R_t(\phi):= \condesph{(C_T(\phi) - C_t(\phi))^2}, \quad 0 \leq t \leq T.
$$
\begin{definition} \label{def2}
An $\bH$-strategy  $\phi=(\theta, \eta)$  is called $\bH$-risk-minimizing if $V_T(\phi) = \xi$ $\P$-a.s. and if for any $\bH$-strategy  $\psi$ such that $V_T(\psi) = \xi$ $\P$-a.s., we have
$R_t(\phi) \leq R_t(\psi)$ $\P$-a.s. for every $t \in [0,T]$.
\end{definition}

\begin{remark} \label{R1}
By Corollary 3.1 in~\cite{s94} we have  that if $\phi=(\theta, \eta)$  is an $\bH$-risk-minimizing strategy then $\phi$  is mean-self-financing, i.e. the cost process $C(\phi)$ is an $\bF$-martingale. 
Moreover,  if $\phi=(\theta, \eta)$  is 
a mean-self-financing $\bH$-strategy, then $V(\phi)$ is an $\bF$-martingale, hence for a given $\xi \in L^2(\Omega,\F_T,\P;\R)$, we have that $V_t(\phi) = \condespf{\xi}$, for every $t \in [0,T]$.
\end{remark}
\noindent To prove the main result of this section, we need the following Lemma.
\begin{lemma}\label{ORTO}
Let $O=(O_t)_{0\leq t \leq T}$ be a square-integrable $\bF$-martingale with $O_0 = 0$, satisfying the orthogonality condition
$$\esp{O_T \int_0^T \varphi_t \ud M_t}=0$$
for all $\bH$-predictable processes $\varphi=(\varphi_t)_{0\leq t\leq T}$ such that $\esp{\int_0^T|\varphi_t|^2\ud \langle M\rangle_t} < \infty$.
Then for any $t \in [0,T]$
\begin{equation*} \label{orto}
\condesph{(O_T - O_t) \int_t^T \varphi_s  \ud M_s}=0\quad \P-{\rm a.s.}.
\end{equation*}
\end{lemma}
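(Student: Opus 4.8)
The plan is to show that the $\H_t$-conditional expectation vanishes by testing against an arbitrary $B\in\H_t$, thereby reducing the claim to the orthogonality hypothesis on $O_T$ combined with the $\bF$-martingale property of stochastic integrals against $M$.

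First I would fix $t\in[0,T]$, take $B\in\H_t$ and an arbitrary $\bH$-predictable $\varphi$ with $\esp{\int_0^T|\varphi_s|^2\ud\langle M\rangle_s}<\infty$, and introduce the process $\psi_s:=\I_B\,\I_{(t,T]}(s)\,\varphi_s$. The key observation is that $\psi$ is again $\bH$-predictable: the process $\I_B\,\I_{(t,T]}(\cdot)$ is left-continuous and $\bF$-adapted, and for $s>t$ its value $\I_B$ is $\H_t$-measurable, hence $\H_{s-}$-measurable, so $\I_B\,\I_{(t,T]}(\cdot)$ is $\bH$-predictable; multiplying by the $\bH$-predictable $\varphi$ keeps it $\bH$-predictable. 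Moreover $\esp{\int_0^T|\psi_s|^2\ud\langle M\rangle_s}\le\esp{\int_0^T|\varphi_s|^2\ud\langle M\rangle_s}<\infty$, so $\psi$ is an admissible test process, and applying the orthogonality hypothesis to $\psi$ yields $\esp{O_T\,\I_B\int_t^T\varphi_s\ud M_s}=0$.

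Next I would remove $O_t$. Since $\varphi$ is $\bH$-predictable, hence $\bF$-predictable, and square-integrable with respect to $\ud\langle M\rangle\otimes\ud\P$, the process $\int_0^\cdot\varphi_s\ud M_s$ is a square-integrable $\bF$-martingale, so $\condespf{\int_t^T\varphi_s\ud M_s}=0$. As $\I_B O_t$ is $\F_t$-measurable and square-integrable, conditioning on $\F_t$ gives $\esp{\I_B O_t\int_t^T\varphi_s\ud M_s}=\esp{\I_B O_t\,\condespf{\int_t^T\varphi_s\ud M_s}}=0$. Subtracting the two identities, $\esp{\I_B(O_T-O_t)\int_t^T\varphi_s\ud M_s}=0$ for every $B\in\H_t$; since $(O_T-O_t)\int_t^T\varphi_s\ud M_s$ is integrable (by Cauchy--Schwarz, both factors lying in $L^2$), this is exactly the statement that $\condesph{(O_T-O_t)\int_t^T\varphi_s\ud M_s}=0$ $\P$-a.s.

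The only delicate point is the verification that the truncated and localized test process $\psi$ is still $\bH$-predictable and admissible; once this is in place, the conclusion follows directly from the orthogonality hypothesis together with the martingale property of the stochastic integral, and I do not expect any genuine obstacle.
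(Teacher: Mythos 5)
Your proof is correct and follows essentially the same route as the paper's: you test the orthogonality hypothesis against the $\bH$-predictable process $\I_B\,\I_{(t,T]}(\cdot)\,\varphi$ to kill the $O_T$ term, and you eliminate the $O_t$ term using the $\bF$-martingale property of $\int_0^\cdot\varphi_s\,\ud M_s$ together with the $\F_t$-measurability of $\I_B O_t$. The only (cosmetic) difference is that the paper handles the $O_t$ term by splitting $\int_t^T=\int_0^T-\int_0^t$ and applying the tower property, whereas you condition $\int_t^T\varphi_s\,\ud M_s$ on $\F_t$ directly.
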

 \begin{proof}
Since for any $\bH$-predictable process $\varphi$ 
$$
\I_{(t,T]}(s)\I_B \varphi_s, \quad B \in \H_t,\ t \in [0,T),
$$
is $\bH$-predictable, 
we get
$$
\esp{O_T \I_B  \int_t^T \varphi_s \ud M_s}= \esp{  \I_B  \condesph{ O_T  \int_t^T \varphi_s \ud M_s}}=0, \quad \forall B \in \H_t,
$$
and then
$$
\condesph{O_T\int_t^T \varphi_s \ud M_s}=0 \quad \P-{\rm a.s.}.
$$
\noindent Finally, let us observe that 
$$
\condesph{O_t \int_t^T \varphi_s  \ud M_s} = \condesph{O_t \int_0^T \varphi_s  \ud M_s} -  \condesph{O_t \int_0^t \varphi_s  \ud M_s}=0
$$
since 
$$ 
\condesph{O_t \int_0^T \varphi_s  \ud M_s} = \condesph{ \condespf{ O_t \int_0^T \varphi_s  \ud M_s}} = \condesph{O_t \int_0^t \varphi_s  \ud M_s},
$$
and this concludes the proof.
\end{proof} 
\noindent We are now in the position to provide an alternative proof to that given in~\cite{s94}, concerning the explicit representation for an $\bH$-risk-minimizing strategy, by applying the Galtchouk-Kunita-Watanabe decomposition under partial information and the representation result given in Proposition \ref{formula}. 
\begin{theorem} \label{ultimo} 
For every $\xi \in L^2(\Omega,\F_T,\P;\R)$, there exists a unique $\bH$-risk-minimizing strategy  $\phi^{\H}=(\theta^{\H}, \eta^{\H})$ 
 such that $\theta^{\H}= H^\H$, where $H^\H$ is given by 
\eqref{eq:radnik1} and $\eta_t^{\H} = \condespf{\xi} - \theta^{\H}_t M_t$, for every $t\in [0,T]$.
 \end{theorem}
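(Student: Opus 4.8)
The plan is to exploit the Galtchouk-Kunita-Watanabe decomposition under partial information (Proposition \ref{prop:GKW}, equation \eqref{eq:GKW}) in the same spirit as F\"ollmer and Sondermann did in the full information setting, and then to identify the resulting integrand with the Radon-Nikodym derivative \eqref{eq:radnik1} via Proposition \ref{formula}. First I would apply Proposition \ref{prop:GKW} to the claim $\xi$, obtaining the unique decomposition $\xi = U_0 + \int_0^T H^\H_t \ud M_t + O_T$ with $U_0 = \condespho{\xi} \in \H_0$, $H^\H$ an $\bH$-predictable process in $L^2(\Omega,\ud\langle M\rangle\otimes\ud\P)$, and $O$ a square-integrable $\bF$-martingale, $O_0 = 0$, orthogonal to $\mathcal L_T^\H$. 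I would then set $\theta^\H := H^\H$ and $\eta^\H_t := \condespf{\xi} - \theta^\H_t M_t$, so that the value process is $V_t(\phi^\H) = \condespf{\xi}$, which is an $\bF$-martingale with $V_T(\phi^\H) = \xi$ $\P$-a.s., and the associated cost process is $C_t(\phi^\H) = V_t(\phi^\H) - \int_0^t \theta^\H_r \ud M_r = U_0 + O_t$, again an $\bF$-martingale; hence $\phi^\H$ is mean-self-financing.

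The core of the argument is optimality. Let $\psi = (\theta,\eta)$ be any competing $\bH$-strategy with $V_T(\psi) = \xi$ $\P$-a.s. By Remark \ref{R1} it suffices to compare $\phi^\H$ against mean-self-financing competitors, so $C(\psi)$ is an $\bF$-martingale and $V_t(\psi) = \condespf{\xi}$; thus $V_t(\psi) = V_t(\phi^\H)$ for all $t$. Writing $C_t(\psi) - C_t(\phi^\H) = \int_0^t (\theta^\H_r - \theta_r)\ud M_r =: L_t$, an $\bF$-martingale, one has for each $t \in [0,T]$
$$
C_T(\psi) - C_t(\psi) = \big(C_T(\phi^\H) - C_t(\phi^\H)\big) + (L_T - L_t) = (O_T - O_t) + \int_t^T (\theta^\H_r - \theta_r)\ud M_r.
$$
Squaring and taking $\condesph{\cdot}$, the risk of $\psi$ splits as
$$
R_t(\psi) = \condesph{(O_T - O_t)^2} + 2\,\condesph{(O_T - O_t)\int_t^T(\theta^\H_r - \theta_r)\ud M_r} + \condesph{\Big(\int_t^T(\theta^\H_r - \theta_r)\ud M_r\Big)^2}.
$$
Since $\theta^\H - \theta$ is $\bH$-predictable and square-integrable against $\langle M\rangle$, Lemma \ref{ORTO} kills the cross term, leaving $R_t(\psi) = R_t(\phi^\H) + \condesph{(\int_t^T(\theta^\H_r - \theta_r)\ud M_r)^2} \ge R_t(\phi^\H)$ $\P$-a.s., with equality for all $t$ forcing $\theta = \theta^\H$ in $L^2(\ud\langle M\rangle\otimes\ud\P)$ (then $\eta = \eta^\H$ follows from equality of value processes). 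This gives both optimality and uniqueness.

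Finally I would record that $\theta^\H = H^\H$ admits the representation \eqref{eq:radnik1}: by Theorem \ref{th:generalcase} the integrand in \eqref{eq:GKW} is exactly $\ud\mu^\H/\ud\mu|_{\mathcal P^\H}$, and Proposition \ref{formula} rewrites this as $\ud A^\bH_t / \ud\langle M\rangle^\bH_t$, which is precisely the expression of Theorem 3.1 in~\cite{s94}. The main obstacle is the optimality step: one must be careful to reduce to mean-self-financing competitors before comparing (otherwise $V_t(\psi) \neq V_t(\phi^\H)$ in general and the clean orthogonal splitting fails), and the vanishing of the cross term genuinely relies on the weak orthogonality condition \eqref{eq:orthogcond} through the conditional version in Lemma \ref{ORTO} — this is the place where the $\bH$-predictability of $\theta^\H - \theta$ is indispensable.
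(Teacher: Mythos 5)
Your proposal is correct and follows essentially the same route as the paper: construct $\phi^\H$ from the Galtchouk--Kunita--Watanabe decomposition under partial information, reduce to mean-self-financing competitors via Remark \ref{R1}, split the residual cost with the cross term killed by Lemma \ref{ORTO}, and deduce uniqueness from the vanishing of $\condespho{\int_0^T(\theta^\H_s-\theta_s)^2\,\ud\langle M\rangle_s}$, identifying $\theta^\H=H^\H$ with \eqref{eq:radnik1} through Theorem \ref{th:generalcase} and Proposition \ref{formula}. No substantive differences from the paper's argument.
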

 
 \begin{proof}
 The proof is similar to that of Theorem 2.4 of~\cite{s01} performed in the full information case.
 Let $\phi=(\theta,\eta)$ be a mean-self-financing $\bH$-strategy such that $V_T(\phi) = \xi$ $\P$-a.s.. Hence, by computing the Galtchouk-Kunita-Watanabe decomposition under partial information, see \eqref{eq:GKWgeneral}, we have
\begin{gather*}
C_T( \phi) - C_t( \phi) = V_T( \phi) - V_t( \phi) - \int_t^T  \theta_s \ud M_s = \xi - V_t( \phi) - \int_t^T  \theta_s \ud M_s\\
=U_0 + \int_0^T H_t^\H \ud M_t + O_T - \int_t^T  \theta_s \ud M_s  - V_t( \phi),
\end{gather*}
where $H^\H$ is given by \eqref{eq:radnik1}.
Since $V_t(\phi) = E [ \xi | \F_t]$, for every $t\in [0,T]$, see Remark \ref{R1}, we get that
$$
V_t( \phi) = U_0 + \int_0^t H_s^\H \ud M_s+ O_t
$$
and
$$ 
C_T(\phi) - C_t( \phi) = \int_t^T  \{ H_s^\H  - \ \theta_s\}  \ud M_s + O_T - O_t.
$$
By similar computation we get that 
$$
C_T(\phi^\H) - C_t( \phi^\H) =  O_T - O_t.
$$
Finally 
\begin{equation*}
\begin{split}(C_T(\phi) - C_t( \phi) )^2 & =  \Big (C_T(\phi^\H) - C_t( \phi^\H)  \Big)^2 + \Big ( \int_t^T  \{ H_s^\H  - \theta_s\}  \ud M_s  \Big )^2\\
& \qquad  + 2 (O_T - O_t) \int_t^T  \{ H_s^\H  - \ \theta_s\}  \ud M_s
\end{split}
\end{equation*}
and by Lemma \ref{ORTO} we obtain that 
$$
R_t(\phi) = R_t(\phi^\H) + \condesph{\left( \int_t^T  \{ H_s^\H  -  \theta_s\}  \ud M_s\right)^2} \geq R_t(\phi^\H).
$$
Hence $\phi^\H$ is $\bH$-risk-minimizing.
If some other $\phi$ is also $\bH$-risk-minimizing then 
$$
\condespho{\left( \int_0^T  \{ H_s^\H  -  \theta_s\}  \ud M_s \right)^2} = \condespho{\int_0^T  \{ H_s^\H  -  \theta_s\} ^2 \ud \langle M \rangle_s} = 0,
$$
which implies $H^\H=\theta$. 
Since $V_t(\phi) = V_t(\phi^\H) = \condespf{\xi}$ for each $t \in [0,T]$, we also obtain $\phi= \phi^\H.$
\end{proof}

\noindent In the rest of the section we investigate the case where there is a relationship between the information flow $\bH$ and the filtration generated by the stock price $M$, that we denote by $\bF^M$. 
A possible choice is the assumption that investors acting in the market have access only to the information contained in past asset prices, that is $\bH = \bF^M$. Such a situation has been studied for instance in~\cite{f2000} and~\cite{c06} for stock price dynamics with jumps.
In the sequel we will  assume $\bH \subseteq \bF^M$, which takes also into account, for instance, the case where the asset price is only observed at discrete times or with a fixed delay $\tau \in (0,T)$, i.e. $\H_t=\F^M_{(t-\tau)^+}$, for every $t \in (0,T)$.
\medskip

\noindent In a such particular case, when in addition $\xi \in L^2(\Omega,\F^M_T,\P;\R)$, we can find an $\bH$-risk minimizing strategy, $\phi^\H= (\theta^\H, \eta^\H)$,  where $\theta^H$ is $\bH$-predictable and $\eta^H$ is $\bF^M$-adapted, while in the general case $\eta^\H$ has been taken $\bF$-adapted.   This means that we study the situation where the agent has at her/his disposal the information flow $\bH\subseteq \bF^M$  about trading in stocks and the filtration $\bF^M$ about trading in the riskless asset, and when $\bH= \bF^M$ the same information flow.

More precisely, from now on we restrict ourself to consider $\bH$-strategies $\phi= (\theta, \eta)$ as in Definition \ref{def1} where $\eta$ is chosen $\bF^M$-adapted. 

\begin{remark} \label{R2}
Let us observe that given an $\bH$-strategy $\phi= (\theta, \eta)$, the associated value process $V(\phi) := \theta M + \eta$  turns out to be $\bF^M$-adapted. 
By Corollary 3.1 in~\cite{s94}, we have  that if $\phi=(\theta, \eta)$  is an $\bH$-risk-minimizing strategy according to this new definition, then $\phi$  is mean-self-financing, i.e. the cost process $C(\phi)$ is an $\bF^M$-martingale. 
Moreover,  if $\phi=(\theta, \eta)$  is 
a mean-self-financing $\bH$-strategy, then  $V(\phi)$ is an $\bF^M$-martingale, hence $V_t(\phi) = E [\xi | \F^M_t]$, for every $t \in [0,T]$.
\end{remark}
\medskip

\noindent We are now ready to give the following result.
 
 \begin{theorem} \label{ultimo1} 
For every $\xi \in L^2(\Omega,\F^M_T,\P;\R)$, there exists a unique $\bH$-risk-minimizing strategy  $\phi^{\H}=(\theta^{\H}, \eta^{\H})$ 
 such that $\theta^{\H}= H^\H$, where $H^\H$ is given 
by \eqref{eq:radnik1} and $\eta^{\H} = E [\xi | \F^M_t]  - \theta^{\H}_t M_t$, for every $t \in [0,T]$.
 \end{theorem}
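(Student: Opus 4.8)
The plan is to mirror the proof of Theorem~\ref{ultimo} almost verbatim, with the only differences being bookkeeping on which filtration the various conditional expectations and martingale properties live on. First I would take an arbitrary mean-self-financing $\bH$-strategy $\phi=(\theta,\eta)$ in the restricted sense (so $\eta$ is $\bF^M$-adapted) with $V_T(\phi)=\xi$ $\P$-a.s., and invoke Remark~\ref{R2} to conclude that $V(\phi)$ is an $\bF^M$-martingale, whence $V_t(\phi)=\esp{\xi|\F_t^M}$ for every $t\in[0,T]$. Since $\xi\in L^2(\Omega,\F_T^M,\P;\R)$ and $\bH\subseteq\bF^M$, the Galtchouk-Kunita-Watanabe decomposition under partial information \eqref{eq:GKWgeneral} gives $\xi=U_0+\int_0^T H_t^\H\ud M_t+O_T$, where $H^\H$ is the $\bH$-predictable integrand of \eqref{eq:radnik1} and $O$ is a square-integrable $\bF$-martingale (in fact $\bF^M$-martingale, since everything in sight is $\bF^M$-measurable) with $O_0=0$ and $\esp{O_T\eta}=0$ for every $\eta\in\mathcal L_T^\H$.

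Next I would compute, exactly as in the proof of Theorem~\ref{ultimo},
\begin{gather*}
C_T(\phi)-C_t(\phi)=\xi-V_t(\phi)-\int_t^T\theta_s\ud M_s\\
=\int_t^T\{H_s^\H-\theta_s\}\ud M_s+(O_T-O_t),
\end{gather*}
using $V_t(\phi)=U_0+\int_0^t H_s^\H\ud M_s+O_t$, and analogously $C_T(\phi^\H)-C_t(\phi^\H)=O_T-O_t$ for the candidate strategy $\phi^\H=(\theta^\H,\eta^\H)$ with $\theta^\H=H^\H$ and $\eta_t^\H=\esp{\xi|\F_t^M}-\theta_t^\H M_t$. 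Expanding the square $(C_T(\phi)-C_t(\phi))^2$ and taking $\bH$-conditional expectation, the cross term vanishes by Lemma~\ref{ORTO}, which applies since $O$ satisfies the orthogonality condition \eqref{eq:orthogcond} and $H^\H-\theta$ is $\bH$-predictable; this yields
$$
R_t(\phi)=R_t(\phi^\H)+\condesph{\left(\int_t^T\{H_s^\H-\theta_s\}\ud M_s\right)^2}\geq R_t(\phi^\H),
$$
so $\phi^\H$ is $\bH$-risk-minimizing. For uniqueness, if some other $\phi$ is also $\bH$-risk-minimizing, then Remark~\ref{R2} guarantees $\phi$ is mean-self-financing, so the above identity at $t=0$ forces $\condespho{\int_0^T\{H_s^\H-\theta_s\}^2\ud\langle M\rangle_s}=0$, hence $\theta=H^\H$ $\ud\langle M\rangle\otimes\ud\P$-a.e.; and since $V_t(\phi)=V_t(\phi^\H)=\esp{\xi|\F_t^M}$, we also get $\eta=\eta^\H$, so $\phi=\phi^\H$.

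The one point requiring genuine care — and the main obstacle — is verifying that the decomposition \eqref{eq:GKWgeneral} of $\xi$, a priori an $\bF$-decomposition, can be taken with an $\bF^M$-martingale $O$ when $\xi$ is $\F_T^M$-measurable, so that $V(\phi)$ and $C(\phi)$ are genuinely $\bF^M$-martingales and Lemma~\ref{ORTO} (stated for $\bF$-martingales but used here at the $\bH$-conditional level) applies cleanly. This follows because $U_0=\condespho{\xi}$ and $\int_0^\cdot H_s^\H\ud M_s$ are $\bF^M$-adapted, so $O=\xi-U_0-\int_0^\cdot H_s^\H\ud M_s$ is $\bF^M$-adapted and is an $\bF$-martingale, hence also an $\bF^M$-martingale by the tower property; the orthogonality $\esp{O_T\eta}=0$ for $\eta\in\mathcal L_T^\H$ is inherited unchanged, and that is all Lemma~\ref{ORTO} uses. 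Everything else is a routine transcription of the proof of Theorem~\ref{ultimo}, replacing $\bF$ by $\bF^M$ wherever the value and cost processes appear.
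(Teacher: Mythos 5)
There is a genuine gap, and it sits exactly at the point you flag as ``the one point requiring genuine care''. Your resolution is wrong: the orthogonal martingale $O$ in \eqref{eq:GKWgeneral} is \emph{not} $\bF^M$-adapted in general. The identity $O=\xi-U_0-\int_0^\cdot H_s^\H\,\ud M_s$ holds only at the terminal time; for $t<T$ the martingale is
$O_t=\mathbb E[\xi\,|\,\F_t]-U_0-\int_0^t H_s^\H\,\ud M_s$, and $\mathbb E[\xi\,|\,\F_t]$ need not be $\F_t^M$-measurable even though $\xi$ is $\F_T^M$-measurable, because the larger filtration $\bF$ may carry extra information relevant for $\xi$ (e.g.\ $M_t=\int_0^t\sigma_s\,\ud W_s$ with $\sigma$ independent of $W$, $\F_t=\F_t^W\vee\sigma(\sigma_s,\,s\le T)$ and $\xi=M_T^2$: then $\mathbb E[\xi\,|\,\F_t]=M_t^2+\int_t^T\sigma_s^2\,\ud s$ is not $\F_t^M$-measurable). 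Consequently your key step $V_t(\phi)=U_0+\int_0^t H_s^\H\,\ud M_s+O_t$ fails: by Remark \ref{R2} the left-hand side is $\mathbb E[\xi\,|\,\F_t^M]$, while the right-hand side equals $\mathbb E[\xi\,|\,\F_t]$, and with it the computation of $C_T(\phi)-C_t(\phi)$ and the application of Lemma \ref{ORTO} (which concerns $O_T-O_t$ for the $\bF$-martingale $O$, not the increment that actually appears) break down.

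The repair is what the paper does: only the terminal value $O_T$ is $\F_T^M$-measurable, so one replaces the process $O$ by its $\bF^M$-optional projection $\hat O_t:=\mathbb E[O_t\,|\,\F_t^M]$, which is an $\bF^M$-martingale with $\hat O_T=\mathbb E[O_T\,|\,\F_T^M]=O_T$, and checks that the orthogonality is inherited, since for every admissible $\bH$-predictable $\varphi$ the integral $\int_0^T\varphi_t\,\ud M_t$ is $\F_T^M$-measurable and hence
$\esp{\hat O_T\int_0^T\varphi_t\,\ud M_t}=\esp{O_T\int_0^T\varphi_t\,\ud M_t}=0$. This exhibits $\xi=U_0+\int_0^T H_t^\H\,\ud M_t+\hat O_T$ as the Galtchouk--Kunita--Watanabe decomposition under restricted information relative to $\bF^M$, and then your transcription of the proof of Theorem \ref{ultimo} goes through verbatim with $\bF$ replaced by $\bF^M$ and $O$ by $\hat O$ (Lemma \ref{ORTO} applies to $\hat O$ because its proof only uses the martingale property with respect to a filtration containing $\bH$ together with the orthogonality condition). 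The rest of your argument --- the cost computations, the cross-term cancellation, and the uniqueness step --- is fine once this substitution is made.
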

 
 \begin{proof}
Since $\xi$ is $\F^M_T$-measurable, by decomposition \eqref{eq:GKWgeneral} we obtain that
$$
\xi = \mathbb E[\xi | \F^M_T ] = U_0 + \int_0^T H_t^\H \ud M_t + \mathbb E[ O_T | \F^M_T],
$$
where $H^\H$ is given by \eqref{eq:radnik1}.
Set $\hat O_t := \mathbb E[O_t | \F^M_t]$, for each $t \in [0,T]$. It is known that $\hat O$ is an $\bF^M$-martingale and 
$$
\mathbb E\left[ \hat O_T  \int_0^T H_t^\H \ud M_t \right] = \mathbb E\left[\mathbb E\left[ \hat O_T |  \F^M_T\right] \int_0^T H_t^\H \ud M_t\right] = \mathbb E\left[  O_T  \int_0^T H_t^\H \ud M_t\right]=0.
$$
Therefore we obtain the Galtchouk-Kunita-Watanabe decomposition of $\xi$ under restricted information with respect to the filtration  $\bF^M$, that is
$$
\xi = U_0 + \int_0^T H_t^\H \ud M_t + \hat O_T,\quad \P-{\rm a.s.}. 
$$
The rest of the proof 
follows from Theorem \ref{ultimo} by replacing the filtration $\bF$ by $\bF^M$ and the $\bF$-martingale $O$ by the $\bF^M$-martingale $\hat O$. 
\end{proof}


\begin{center}
{\bf Acknowledgements}
\end{center}

\noindent Part of the paper was written during the stay of the 
second named author  
 at INRIA Paris-Rocquencourt (Projet MathFi).
The second and third named authors would like to thank
Prof. Agn\`es Sulem for stimulating discussions. 
The work of the first and second named authors was partially supported by PRIN 2008 Project ``Probability and Finance''. The work of the third named author  
was  partially supported by the ANR
Project MASTERIE 2010 BLAN-0121-01 and it was partially 
carried out during his visit to the Bernoulli Center (EPFL Lausanne).

\appendix

\

\begin{center}
      {\bf APPENDIX}
    \end{center}

\section{Technical results}

\noindent Recall that $M=(M_t)_{0\leq t\leq T}$ is a square-integrable $\bF$-martingale and assume that $\bF = \bF^M:=(\F_t^M)_{0 \leq t \leq T}$, i.e. the information flow $\bF$ coincides with the canonical filtration $\bF^M$ of $M$.
\begin{lemma} \label{lem:repre}
Let $M$ be a L{\'e}vy process and $\xi=h(M_T) \in L^2(\Omega,\F_T^M,\P;\R)$ for some measurable function $h:\R \rightarrow \R$. Then, there exists a measurable function $F:[0,T] \times \R \rightarrow \R$ such that
$$
\xi = \esp{\xi} + \int_0^T F(s,M_{s-})\ud M_s + \tilde O_T, \quad \P-\mbox{a.s.},
$$
where $\tilde O=(\tilde O_t)_{0\leq t\leq T}$ is a square-integrable $\bF^M$-martingale null at zero such that $\langle \tilde O,M\rangle_t=0$, for every $t \in [0,T]$. Moreover, the following integrability condition is satisfied
$$
\esp{\int_0^T|F(s,M_{s-})|^2\ud \langle M\rangle_s}< \infty.
$$
\end{lemma}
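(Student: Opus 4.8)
The plan is to prove the representation first for a dense class of smooth payoffs, where an explicit It\^o-type computation is available, and then to pass to the $L^2$-limit. Throughout I would use the L\'evy--It\^o decomposition $M_t=\sigma W_t+\int_0^t\int_\R x\,\tilde N(\ud s,\ud x)$ of the square-integrable L\'evy martingale $M$, where $W$ is a Brownian motion, $\tilde N$ the compensated jump measure with L\'evy measure $\nu$ satisfying $\int_\R x^2\,\nu(\ud x)<\infty$, and $\langle M\rangle_t=c\,t$ with $c:=\sigma^2+\int_\R x^2\,\nu(\ud x)=\esp{M_1^2}$ (the degenerate case $c=0$, in which $M\equiv0$, being trivial). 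I shall also use the classical analogue of Lemma~\ref{lem:closed}: the stable subspace $\mathcal S(M):=\big\{\int_0^T\varphi_s\,\ud M_s:\ \varphi\ \text{predictable},\ \esp{\int_0^T\varphi_s^2\,\ud\langle M\rangle_s}<\infty\big\}$ is closed in $L^2(\Omega,\F_T,\P)$.

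First, for $h\in C_c^\infty(\R)$ I would set $u(t,x):=\esp{h(x+M_{T-t})}$; by stationarity and independence of the increments, $\condespf{h(M_T)}=u(t,M_t)$. Differentiating under the expectation and using that $C_c^\infty(\R)$ lies in the domain of the L\'evy generator $\mathcal L f(x)=\tfrac{\sigma^2}{2}f''(x)+\int_\R[f(x+y)-f(x)-yf'(x)]\,\nu(\ud y)$ (which commutes with translations), one checks that $u$ is bounded, of class $C^{1,2}$ on $[0,T]\times\R$ with bounded $x$-derivatives, satisfies $u(T,\cdot)=h$, and solves $\partial_t u+\mathcal L u=0$. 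Applying It\^o's formula for L\'evy processes to $u(t,M_t)$, the finite-variation terms cancel by this equation and I would obtain
$$u(t,M_t)=\esp{h(M_T)}+\int_0^t\sigma\,\partial_x u(s,M_{s-})\,\ud W_s+\int_0^t\!\!\int_\R\big(u(s,M_{s-}+y)-u(s,M_{s-})\big)\,\tilde N(\ud s,\ud y).$$
Then I would define the Borel function $F(s,z):=\tfrac1c\big(\sigma^2\,\partial_x u(s,z)+\int_\R y\,(u(s,z+y)-u(s,z))\,\nu(\ud y)\big)$ (bounded, since $u$ is bounded with bounded $x$-derivative and $\int(y^2\wedge|y|)\,\nu(\ud y)<\infty$) and set $\tilde O_t:=u(t,M_t)-\esp{h(M_T)}-\int_0^t F(s,M_{s-})\,\ud M_s$. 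Expanding $\int_0^\cdot F\,\ud M$ via the L\'evy--It\^o decomposition and computing the predictable covariation, one sees that $F$ is precisely the choice making $\langle\tilde O,M\rangle\equiv0$; since $u$ is bounded, $\tilde O$ is a square-integrable $\bF^M$-martingale null at $0$. Evaluating at $t=T$ gives the claimed decomposition for $\xi=h(M_T)$, and orthogonality of $\int F\,\ud M$ and $\tilde O$ in $L^2$ yields $\esp{\int_0^T F(s,M_{s-})^2\,\ud\langle M\rangle_s}=\big\|\int_0^T F(s,M_{s-})\,\ud M_s\big\|_{L^2}^2\le\var{\xi}<\infty$.

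For a general $\xi=h(M_T)\in L^2$, I would approximate $h$ by $h_n\in C_c^\infty(\R)$ with $h_n\to h$ in $L^2(\rho_T)$, $\rho_T$ being the law of $M_T$, so that $\xi_n:=h_n(M_T)\to\xi$ in $L^2(\P)$. The previous step gives $\xi_n=\esp{\xi_n}+\int_0^T F_n(s,M_{s-})\,\ud M_s+\tilde O^n_T$ with $\langle\tilde O^n,M\rangle\equiv0$, so $\int_0^T F_n(s,M_{s-})\,\ud M_s$ is the orthogonal projection of $\xi_n-\esp{\xi_n}$ onto $\mathcal S(M)$; by closedness of $\mathcal S(M)$ it converges in $L^2$, and by the It\^o isometry the processes $(s,\omega)\mapsto F_n(s,M_{s-}(\omega))$ converge in $L^2(\ud\langle M\rangle\otimes\ud\P)$ to some $\bar F$. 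Since each $F_n(\cdot,M_{\cdot-})$ is measurable for the $\sigma$-field generated on $[0,T]\times\Omega$ by $(s,\omega)\mapsto(s,M_{s-}(\omega))$ — which makes the corresponding $L^2$-space a closed subspace — the limit $\bar F$ is measurable for it too, and by the Doob--Dynkin lemma $\bar F_s(\omega)=F(s,M_{s-}(\omega))$ for some Borel $F:[0,T]\times\R\to\R$ with $\esp{\int_0^T|F(s,M_{s-})|^2\,\ud\langle M\rangle_s}<\infty$. Passing to the limit, $\tilde O^n_T\to\tilde O_T:=\xi-\esp{\xi}-\int_0^T F(s,M_{s-})\,\ud M_s$ in $L^2$, and $\tilde O_t:=\condespf{\tilde O_T}$ is a square-integrable $\bF^M$-martingale with $\tilde O_0=\lim_n\esp{\tilde O^n_T}=0$. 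Finally $\langle\tilde O,M\rangle\equiv0$: for each $t$, the Kunita--Watanabe inequality gives $\esp{|\langle\tilde O^n-\tilde O,M\rangle_t|}\le\esp{\langle\tilde O^n-\tilde O\rangle_t}^{1/2}\esp{\langle M\rangle_t}^{1/2}\to0$, because $\esp{\langle\tilde O^n-\tilde O\rangle_t}=\esp{(\tilde O^n_t-\tilde O_t)^2}\to0$, whence $\langle\tilde O,M\rangle_t=0$ and, by right-continuity, $\langle\tilde O,M\rangle\equiv0$.

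The main obstacles will be: (i) producing enough regularity of $u$ to justify It\^o's formula without any regularity assumption on $\nu$ — which is why I would carry out the computation for $h\in C_c^\infty$ rather than trying to smooth the semigroup; and (ii) checking that the Markovian functional form $F_n(s,M_{s-})$ is preserved under the $L^2$-limit, which rests exactly on the closedness of the space of square-integrable processes that are functions of $(s,M_{s-})$ together with Doob--Dynkin. The covariation bookkeeping of the first step, which pins down the explicit formula for $F$, is the remaining computational point to be careful about.
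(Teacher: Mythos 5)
Your proof is correct, but it takes a genuinely different route for the key step. The paper handles the dense class by taking payoffs that are Fourier transforms of finite measures, $h(x)=\int_\R e^{iax}\,\ud\nu(a)$, for which the representation with an integrand of the form $F(t,M_{t-})$ is simply quoted from an earlier reference (an adaptation of the Fourier approach of Hubalek et al.), and then observes that the Schwartz class is dense in $L^2(\rho_T)$ and is stable under the inverse Fourier transform; you instead take $h\in C_c^\infty(\R)$ and derive the decomposition by hand, via the backward equation $\partial_t u+\mathcal L u=0$ for $u(t,x)=\esp{h(x+M_{T-t})}$, It\^o's formula for the L\'evy--It\^o decomposition of $M$, and the covariation computation that pins down $F=\frac1c\bigl(\sigma^2\partial_x u+\int_\R y\,(u(\cdot+y)-u)\,\ud\nu\bigr)$. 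What your route buys is self-containedness and an explicit formula for the integrand on the dense class (at the cost of justifying the regularity of $u$ and the It\^o computation, which you do correctly for $C_c^\infty$ payoffs); what the paper's route buys is brevity, since the smooth-payoff case is outsourced. The limiting step is essentially the same in both arguments (the Lemma \ref{lem:closed}-type closedness giving convergence of the integrands in $L^2(\Omega,\ud\langle M\rangle\otimes\ud\P)$), with two small divergences: you identify the functional form of the limit via closedness of the subspace of processes measurable for the $\sigma$-field generated by $(s,M_{s-})$ plus Doob--Dynkin, which is a cleaner packaging of the paper's ``a.e.\ convergent subsequence, hence $H^\F_t$ is $\sigma(M_{t-})$-measurable'' argument; and you reconstruct the orthogonal term $\tilde O$ in the limit and re-verify $\langle\tilde O,M\rangle\equiv0$ by Kunita--Watanabe, whereas the paper avoids this by noting that the a priori existing GKW integrand $H^\F$ of $h(M_T)$ is itself the $L^2$-limit, so only its functional form needs to be identified. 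Both are valid; your extra verification is harmless but not needed once you use the projection characterization you already invoked.
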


\begin{proof}
If $\xi$ is given as a Fourier transform of $M_T$, that is, the function $h$ is of the form
\begin{equation} \label{eq:hfunction}
h(x) = \int_\R e^{iax} \ud \nu(a),\quad \mbox{for\ all\ } x \in \R,
\end{equation}
where $\nu$ is a finite measure, the result is contained in Proposition 4.3 of~\cite{gor}, which was an adaptation of~\cite{hubalek}. \\
As a consequence, the thesis follows once we show the existence of a sequence $(h_n)_{n \in \N}$ of functions of the kind \eqref{eq:hfunction} such that
\begin{equation} \label{eq:converg}
\esp{\left|h_n(M_T)-h(M_T)\right|^2} \xrightarrow[n \to \infty]{} 0.
\end{equation}

\noindent To see that, denoting  by $F_n(t,M_{t-})$ the integrand in the Galtchouk-Kunita-Watanabe decomposition
of $h_n(M_T)$, $n\in \N$,  we can proceed as in the proof of Lemma \ref{lem:closed} and we get that  the sequence $ (F_n(t,M_{t-}) )_{n \in \N}$  converges in $L^2(\Omega, \ud \langle M\rangle \otimes \ud \P)$ to  the integrand   $H^\F $ in the Galtchouk-Kunita-Watanabe decomposition 
of $h(M_T)$.  Now, there is a subsequence converging
  $\ud \langle M\rangle \otimes \ud \P$-a.e. 
to the $\bF$-predictable process
  $(H_t^\F)_{0\leq t \leq T}$  and for almost all $t \in [0,T]$,
$H_t^\F$ is $\sigma(M_{t-})$-measurable. Finally this implies 
the existence of a measurable function $F: [0,T] \times \R \rightarrow \R$ 
such that $H_t^\F = F(t,M_{t-})$.

\noindent It remains to show the existence of a sequence $(h_n)_{n\in \N}$ of functions
of type \eqref{eq:hfunction} 
verifying  (\ref{eq:converg}). 
If $\rho_T$ is the law of $M_T$, \eqref{eq:converg}, translates into
\begin{equation} \label{AR2}
\int_\R (h_n(y) - h(y))^2 \ud \rho_T(y) \xrightarrow[n \to \infty]{} 0.
\end{equation}
Since $\rho_T$ is a finite non-negative measure, it is well-known that
the space of smooth functions with compact support is dense in
$L^2(\rho_T)$. This implies that the Schwartz space ${\mathcal S}(\R)$
of the fast decreasing functions is dense in $L^2(\rho_T)$.
Let $(h_n)_{n \in \N}$ belong to ${\mathcal S}(\R)$ such that \eqref{AR2}, 
and consequently, \eqref{eq:converg} holds.
Since the inverse Fourier transform  ${\mathcal F}^{-1}$ maps
 ${\mathcal S}(\R)$ into itself,  then we observe that 
$h_n$ are of the type 
\eqref{eq:hfunction} with $\nu(\ud a) ={\mathcal F}^{-1} h_n(a) \ud a$.

\end{proof}

\bibliographystyle{plain} 
\bibliography{CCRbiblio}

\end{document}